\numberwithin{equation}{section}
\newtheorem{theorem}{Theorem}[section]	
\newtheorem{proposition}[theorem]{Proposition}	
\newtheorem{corollary}[theorem]{Corollary}	
\newtheorem{lemma}[theorem]{Lemma}
\newtheorem{definition}[theorem]{Definition}		
\newtheorem{example}[theorem]{Example}
\newtheorem{remark}[theorem]{Remark}
\newcommand{\Z}{\mathbb{Z}}
\newcommand{\N}{\mathbb{N}}
\newcommand{\C}{\mathscr{C}}
\newcommand{\T}{\mathscr{T}}
\newcommand{\F}{\mathscr{F}}
\newcommand{\z}{\mathscr{Z}}
\newcommand{\n}{\mathscr{N}}
\newcommand{\E}{\mathscr{E}}
\newcommand{\M}{\mathscr{M}}
\begin{document}

\title{\textsc{Torsion theories and coverings of preordered groups}}
\author{Marino Gran*}
\thanks{marino.gran@uclouvain.be * \,}
\author{Aline Michel}
 \email{aline.michel@uclouvain.be}
\thanks{The second author's research is funded by a FRIA doctoral grant of the \emph{Communaut\'e française de Belgique}}

\address{Universit{\'e} Catholique de Louvain, Institut de Recherche en Math{\'e}matique et Physique, Chemin du Cyclotron 2, 1348 Louvain-la-Neuve, Belgium}
\date{}
\begin{abstract}
 In this article we explore a non-abelian torsion theory in the category of preordered groups: the objects of its torsion-free subcategory are the partially ordered groups, whereas the objects of the torsion subcategory are groups (with the total order). The reflector from the category of preordered groups to this torsion-free subcategory has stable units, and we prove that it induces a monotone-light factorization system. We describe the coverings relative to the Galois structure naturally associated with this reflector, and explain how these coverings can be classified as internal actions of a Galois groupoid. Finally, we prove that in the category of preordered groups there is also a pretorsion theory, whose torsion subcategory can be identified with a category of internal groups. This latter is precisely the subcategory of protomodular objects in the category of preordered groups, as recently discovered by Clementino, Martins-Ferreira, and Montoli.
\end{abstract}

\maketitle

\section{Introduction}
The category $\mathsf{PreOrdGrp}$ of \emph{preordered groups} is the category whose objects $(G,\le)$ are groups $G$ endowed with a  preorder relation $\le$ on $G$ which is compatible with the group structure $+$:  $a \le c$ and $b \le d$ implies $a + b \le c + d$, for all $a,b,c,d \in G$. The morphisms in this category are preorder preserving group morphisms. 

Alternatively, a preordered group $(G,\le)$ can be seen in a different way. Indeed, consider the submonoid
\[P_{G} = \{g \in G \ \arrowvert \ 0 \le g \}\]
of $G$, called the \textit{positive cone} of $G$, that has the property of being closed under conjugation in $G$. It is well-known that the category of preordered groups is isomorphic to the category whose objects are pairs $(G,P_{G})$, where $G$ is a group and $P_{G}$ is its positive cone, and whose arrows $(f,\bar{f}) : (G,P_{G}) \rightarrow (H,P_{H})$ are pairs $(f,\bar{f})$ where $f : G \rightarrow H$ is a group morphism and $\bar{f} : P_{G} \rightarrow P_{H}$ is a monoid morphism, such that the following diagram commutes (the vertical morphisms are the inclusions):
\begin{equation}\label{morphismPreOrd}
\begin{tikzcd}
P_{G} \arrow[r,"\bar{f}"] \arrow[d,tail]
& P_{H} \arrow[d,tail]\\
G \arrow[r,"f"']
& H.
\end{tikzcd}
\end{equation}
In this article we will always work with this latter equivalent presentation of the category $\mathsf{PreOrdGrp}$.
 In \cite{Clementino-Martins-Ferreira-Montoli} Clementino, Martins-Ferreira and Montoli proved that $\mathsf{PreOrdGrp}$ has some remarkable exactness properties. First of all, $\mathsf{PreOrdGrp}$ is a \emph{normal} category \cite{JZ}: this means that $\mathsf{PreOrdGrp}$ has a zero-object, any arrow in it can be factorized as a normal epimorphism (i.e. a cokernel) followed by a monomorphism, and these factorizations are pullback-stable. Secondly, in this category normal epimorphisms and effective descent morphisms coincide, an observation which is fundamental in our study of the coverings in $\mathsf{PreOrdGrp}$.

Our first result is that $\mathsf{PreOrdGrp}$ contains two full (replete) subcategories, denoted by $\mathsf{Grp}$ and $\mathsf{ParOrdGrp}$, which form a (non-abelian) torsion theory $(\mathsf{Grp}, \mathsf{ParOrdGrp})$ (Proposition \ref{torsion OrdGrp}). Here the objects of the torsion subcategory $\mathsf{Grp}$ are those preordered groups $(G,G)$ such that the positive cone $P_G$ is $G$ itself, whereas the objects in the torsion-free subcategory $\mathsf{ParOrdGrp}$ have the property that the positive cone is a \emph{reduced monoid}: $x + y = 0$ implies $x = y = 0$, for any $x,y \in P_G$.
Via the isomorphism of categories recalled above, preordered groups with a reduced monoid as positive cone exactly correspond to \emph{partially ordered groups}. We then have a reflective subcategory 
\begin{equation}\label{main-adj-2}
\begin{tikzcd}[column sep = tiny]
\mathsf{PreOrdGrp} \arrow[rr,shift left=0.2cm,"F"]
& \bot
& \mathsf{ParOrdGrp}, \arrow[ll,shift left=0.2cm,hook',"U"]
\end{tikzcd}
\end{equation}
where each component of the unit of the adjunction is a normal epimorphism.
We prove that the reflector $F \colon \mathsf{PreOrdGrp} \rightarrow \mathsf{ParOrdGrp}$ has stable units \cite{Cassidy-Hebert-Kelly} in Proposition \ref{Stable-units}, and this implies that the adjunction can be studied from the point of view of Categorical Galois Theory \cite{Janelidze}. By constructing, for any preordered group $(G,P_{G})$, an effective descent morphism whose domain is a partially ordered group and whose codomain is $(G,P_{G})$ (Proposition \ref{ParOrdGrp covers PreOrdGrp}), we can show that this adjunction induces a \emph{monotone-light} factorization system $({\mathcal E}', \M^*)$ (Theorem \ref{description-coverings}). The class ${\mathcal E}'$ consists of the morphisms in $\mathsf{PreOrdGrp}$ which are \emph{stably} in $\mathcal E$, this meaning that the pullback of a morphism in ${\mathcal E}'$ along any arrow is in $\mathcal E$, i.e. it is inverted by the reflector $F \colon \mathsf{PreOrdGrp} \rightarrow \mathsf{ParOrdGrp}$.

 The class ${\M}^*$ is the important class of \emph{coverings}, in the sense of Galois theory, with respect to the adjunction \eqref{main-adj-2}. In elementary terms, the coverings turn out to be the morphisms $(f,\bar{f}) : (G,P_{G}) \rightarrow (H,P_{H})$ as in \eqref{morphismPreOrd} having a partially ordered kernel: $\mathsf{Ker}(f,\overline{f}) \in \mathsf{ParOrdGrp}$.
In the fourth section we then compare our results with the ones on \emph{locally semisimple coverings} from \cite{JMT}. Categorical Galois Theory \cite{Janelidze} then provides a classification theorem of the coverings in $\mathsf{PreOrdGrp}$ in terms  of the Galois groupoid of the effective descent morphism mentioned above. In our context this groupoid is actually an equivalence relation, 
and the above-mentioned description of the coverings in terms of actions (i.e. discrete fibrations) is explicitly given in Theorem \ref{thm action}). 

It turns out that the adjunction \eqref{main-adj-2} also induces a \emph{pretorsion theory}  (in the sense of \cite{FF, Facchini-Finocchiaro-Gran}) in $\mathsf{PreOrdGrp}$. This is given by the pair $(\mathsf{ProtoPreOrdGrp}, \mathsf{ParOrdGrp})$, where the torsion part is this time the category $\mathsf{ProtoPreOrdGrp}$ whose objects $(G,P_{G})$ are characterized by the fact that the positive cone $P_G$ is a \emph{group}. As shown in \cite{Clementino-Martins-Ferreira-Montoli} these objects are precisely the so-called \emph{protomodular objects} \cite{MRV} of $\mathsf{PreOrdGrp}$. This interesting category, which can be also seen as the category of internal groups in the category $\mathsf{PreOrd}$ of preordered sets (see \cite{Clementino-Martins-Ferreira-Montoli}),  is not only coreflective (as any torsion subcategory of a pretorsion theory is) but also reflective in $\mathsf{PreOrdGrp}$: this is proved in Proposition \ref{reflective}, where an explicit description of the reflector is provided.

We conclude this introduction by mentioning the related work in \cite{FFG, Xarez}, where similar results have been obtained in the context of internal preorders in an exact category. The results on preordered groups presented in this article are not special cases of the ones presented in those references, since a preordered group is not an internal preorder in the category $\mathsf{Grp}$ of groups.  

\vspace{3mm}

\noindent {\bf Acknowledgement}. The authors are grateful to the anonymous referee for some very useful suggestions on a preliminary version of the article.

\section{Preliminaries}
\subsection*{Torsion theories in normal categories}

In this part we briefly recall the notion of torsion theory in a normal category. There are several approaches to non-abelian torsion theories in various contexts, which can be found, for instance, in \cite{Bourn-Gran,CDT, Everaert-Gran, JT} (and in the references therein). 

 A finitely complete category $\C$ is \emph{normal} \cite{JZ} if 
 \begin{enumerate}
 \item $\C$ has a zero object, denoted by $0$;
 \item any arrow $f \colon A \rightarrow B$ in $\C$ factors as a normal epimorphism (i.e. a cokernel) followed by a monomorphism;
 \item normal epimorphisms are stable under pullbacks: in a pullback diagram
  \begin{equation}\label{pullback}
\begin{tikzcd}
E \times_{B} A \arrow[r,"\pi_{2}"] \arrow[d,"\pi_{1}"']
& A \arrow[d,"f"]\\
E \arrow[r,"p"']
& B
\end{tikzcd}
\end{equation}
$\pi_2$ is a normal epimorphism whenever $p$ is a normal epimorphism.
 \end{enumerate} 
Many familiar algebraic categories, such as groups, abelian groups, rings, Lie algebras, crossed modules of groups and of Lie algebras, are normal. For a variety whose theory has a  unique constant $0$, being normal is equivalent to being
$0$-regular (in the sense of \cite{Fichtner}): each congruence is determined by the equivalence class of $0$.
Any semi-abelian category is normal, as well as any homological category \cite{BB}. The categories of topological groups \cite{BC}, compact groups, Heyting semi-lattices \cite{Joh} and cocommutative Hopf algebras over a field \cite{GSV} are all examples of normal categories. It was recently proved that the category of preordered groups is also normal \cite{Clementino-Martins-Ferreira-Montoli}, and this observation will be important for our work. 

In a normal category there is a natural notion of \emph{short exact sequence}: two composable arrows $\kappa$ and $f$ form a short exact sequence 
$$\begin{tikzcd}
0 \arrow[r]
& A \arrow[r,"\kappa"] 
& B \arrow[r,"f"] 
& C  \arrow[r]
& 0
\end{tikzcd}$$
if $\kappa = \mathsf{ker}(f)$ and $f = \mathsf{coker(\kappa})$.
Two useful properties of normal categories are the following (see \cite{BJ}):
\begin{lemma} \label{property-normal}
Let $\C$ be a normal category.
\begin{enumerate}
\item A morphism $f \colon A \rightarrow B$ in $\C$ is a monomorphism if and only if its kernel $\mathsf{Ker}(f)$ is trivial: $\mathsf{Ker}(f) \cong 0$.
\item Given a commutative diagram of short exact sequences in $\C$
\begin{equation}\label{generic-ses}
\begin{tikzcd}
0 \arrow[r]
& A \arrow[r,"\kappa"] \arrow[d,,"a"']
& B \arrow[r,"f"] \arrow[d,"b"]
& C \arrow[d,"c"] \arrow[r]
& 0\\
0 \arrow[r]
& A' \arrow[r,"\kappa'"']
& B' \arrow[r,"f'"']
& C' \arrow[r]
& 0
\end{tikzcd}
\end{equation}
the left-hand square is a pullback if and only if the arrow $c$ is a monomorphism.
\end{enumerate}
\end{lemma}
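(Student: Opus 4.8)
The plan is to prove both statements using only the structural features of a normal category already recorded in the excerpt: the zero object, the normal-epimorphism/monomorphism factorization, and the pullback-stability of normal epimorphisms. Throughout I would argue with morphisms and universal properties rather than with elements, since $\C$ is an abstract normal category.

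For part (1), the forward implication is immediate. If $f$ is a monomorphism and $\kappa = \mathsf{ker}(f) \colon K \to A$, then from $f \circ \kappa = 0 = f \circ 0_{K,A}$ we get $\kappa = 0_{K,A}$; since $\kappa$ is itself a monomorphism, comparing $\kappa \circ \mathrm{id}_K$ with $\kappa \circ 0_{K,K}$ forces $\mathrm{id}_K = 0_{K,K}$, i.e. $K \cong 0$. For the converse I would factor $f = m \circ e$ with $e$ a normal epimorphism (a cokernel) and $m$ a monomorphism. Because $m$ is mono one checks that $\mathsf{Ker}(e) = \mathsf{Ker}(f) \cong 0$. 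Writing $e = \mathsf{coker}(d)$ for some $d$, the relation $e \circ d = 0$ makes $d$ factor through $\mathsf{ker}(e) \cong 0$, so $d = 0$; hence $e = \mathsf{coker}(0)$ is an isomorphism and $f = m \circ e$ is a monomorphism.

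For part (2), I would rely on two standard facts: the pullback of a kernel along any arrow is the kernel of the corresponding composite, and normal epimorphisms are pullback-stable. For the ``if'' direction, assume $c$ is a monomorphism. Pulling back $\kappa' = \mathsf{ker}(f')$ along $b$ yields $\mathsf{ker}(f' \circ b) = \mathsf{ker}(c \circ f)$; since $c$ is mono this kernel coincides with $\mathsf{ker}(f) = \kappa$, so the canonical comparison from $A$ into the pullback is an isomorphism and the left square is a pullback. For the ``only if'' direction I would show, via part (1), that $\mathsf{Ker}(c) \cong 0$. Let $\gamma = \mathsf{ker}(c) \colon K \to C$ and form the pullback $Q$ of the normal epimorphism $f$ along $\gamma$, with projections $\pi_B \colon Q \to B$ and $q \colon Q \to K$; pullback-stability makes $q$ a normal epimorphism. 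The composite $f' \circ b \circ \pi_B$ equals $c \circ f \circ \pi_B = c \circ \gamma \circ q = 0$, so $b \circ \pi_B$ factors through $\kappa' = \mathsf{ker}(f')$. The resulting map $Q \to A'$ together with $\pi_B$ satisfies the compatibility required by the universal property of the left-hand pullback square, yielding a map $Q \to A$ with $\kappa \circ (Q \to A) = \pi_B$. Composing with $f$ then gives $\gamma \circ q = f \circ \pi_B = f \circ \kappa \circ (Q \to A) = 0$; since $\gamma$ is mono, $q = 0$, and as $q$ is a normal epimorphism (hence an epimorphism) this forces $K \cong 0$.

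I expect the main obstacle to be the ``only if'' direction of part (2): one must resist the temptation to chase elements and instead assemble the pullback of $f$ along $\gamma$, the pullback-stability of normal epimorphisms, and the universal property of the left square in exactly the right order, so that the vanishing of $\gamma \circ q$ can be extracted. Once the diagram is arranged correctly the conclusion $K \cong 0$ follows formally, and part (1) converts it back into the statement that $c$ is a monomorphism.
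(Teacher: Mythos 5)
Your argument is correct, but note that the paper does not actually prove this lemma: it simply cites Bourn and Janelidze for both statements, so there is no in-text proof to compare against. Your write-up is therefore a genuine addition, and it holds together. Part (1) is the standard argument: the forward direction cancels $\kappa$ against itself, and the converse uses the (normal epi, mono) factorization $f = m\cdot e$ together with the observation that $\mathsf{Ker}(e)=\mathsf{Ker}(f)\cong 0$ forces $e=\mathsf{coker}(0)$ to be an isomorphism. In part (2), the ``if'' direction correctly identifies the pullback of $\kappa'=\mathsf{ker}(f')$ along $b$ with $\mathsf{ker}(f'\cdot b)=\mathsf{ker}(c\cdot f)=\mathsf{ker}(f)$ when $c$ is mono, and the ``only if'' direction is where the normality axioms earn their keep: you pull back the normal epimorphism $f$ along $\gamma=\mathsf{ker}(c)$, use pullback-stability to get a normal (hence epi) projection $q\colon Q\rightarrow K$, route $b\cdot\pi_B$ through $\kappa'$ and then through the pullback square to conclude $f\cdot\pi_B=0$, whence $\gamma\cdot q=0$, $q=0$, and $K\cong 0$; part (1) then converts this back into $c$ being a monomorphism. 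Each step is justified by a universal property or by one of the three axioms of a normal category quoted in the paper, so the proof is complete and self-contained; the only stylistic remark is that the identification ``pullback of a kernel along $b$ equals $\mathsf{ker}(f'\cdot b)$'' deserves the one-line verification you implicitly perform, since it is the pivot of the ``if'' direction.
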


\begin{definition} \label{def torsion theory}
A \emph{torsion theory} in a normal category $\C$ is given by a pair $(\T,\F)$ of full (replete) subcategories of $\C$ such that:
\begin{itemize}
\item[$(a)$] the only arrow from any $T \in \T$ to any $F \in \F$ is the zero arrow;
\item[$(b)$] for any object $C$ of $\C$ there exists a short exact sequence
\begin{center}
\begin{tikzcd}
0 \arrow[r]
& T \arrow[r,"\epsilon_{C}"]
& C \arrow[r,"\eta_{C}"]
& F \arrow[r]
& 0
\end{tikzcd}
\end{center}
whith $T \in \T$ and $F \in \F$.
\end{itemize}
\end{definition}

Given a torsion theory $(\T,\F)$ in a normal category $\C$ the subcategory $\T$ is called a \textit{torsion subcategory} of $\C$ and the subcategory $\F$ a \textit{torsion-free subcategory} of $\C$, by analogy with the terminology used for the classical torsion theory $(\mathsf{Ab}_{t.}, \mathsf{Ab}_{t.f.})$ in the category $\mathsf{Ab}$ of abelian groups, where $\mathsf{Ab}_{t.}$ is the category of torsion abelian groups and $\mathsf{Ab}_{t.f.}$ the category of torsion-free abelian groups.

Observe that the exact sequence in Definition \ref{def torsion theory} ($b$) is unique, up to isomorphism. Indeed, 
assume that for an object $C$ in $\C$ we have two short exact sequences, with kernel in $\T$ and cokernel in $\F$:
\begin{equation} \label{uniqueness exact sequence}
\begin{tikzcd}
0 \arrow[r]
& T \arrow[r,"\epsilon_{C}"] \arrow[d,dotted,"t"']
& C \arrow[r,"\eta_{C}"] \arrow[d,equal, "1_C"]
& F \arrow[d,dotted,"f"] \arrow[r]
& 0\\
0 \arrow[r]
& T' \arrow[r,"\epsilon_{C}'"']
& C \arrow[r,"\eta_{C}'"']
& F' \arrow[r]
& 0.
\end{tikzcd}
\end{equation}
Then, since $\eta_C$ is the cokernel of $\epsilon_C$ and $\eta_C' \cdot \epsilon_{C}$ is the zero arrow (by $(a)$), there exists a unique morphism $f \colon F \rightarrow F'$ such that $f \cdot \eta_C = \eta_{C}'$. It is then easy to show that $f$ is an isomorphism (its inverse is induced, symmetrically, by the universal property of the cokernel $\eta_C'$). Dually, by using the universal property of kernels, there is  an isomorphism $t : T \rightarrow T'$.

Now, consider a morphism $\phi : C \rightarrow C'$ in $\C$ as in the following diagram
\begin{equation} \label{diagram exact sequence}
\begin{tikzcd}
0 \arrow[r]
& T \arrow[r,"\epsilon_{C}"] \arrow[d,dotted,"T(\phi)"']
& C \arrow[r,"\eta_{C}"] \arrow[d,"\phi"]
& F \arrow[d,dotted,"F(\phi)"] \arrow[r]
& 0\\
0 \arrow[r]
& T' \arrow[r,"\epsilon_{C'}"']
& C' \arrow[r,"\eta_{C'}"']
& F' \arrow[r]
& 0
\end{tikzcd}
\end{equation}
where the two rows are the unique short exact sequences in Definition \ref{def torsion theory} $(b)$ associated with $C$ and $C'$, respectively. As above the dotted arrows $F(\phi) : F \rightarrow F'$ and $T(\phi)  \colon T \rightarrow T'$ are induced by the universal properties of the cokernel $\eta_C$ and of the 
kernel $\epsilon_{C'}$, respectively.
This construction then gives rise to two functors, $T : \C \rightarrow \T$ and $F : \C \rightarrow \F$, which are the right (respectively, the left) adjoint of the inclusion functor $V \colon \T \rightarrow \C$ (respectively, $U \colon \F \rightarrow \C$) (see \cite{Bourn-Gran}, for instance). \
The functor $F : \C \rightarrow \F$ is a (normal epi)-reflector, i.e. a reflector with the property that each component $\eta_{C} : C \rightarrow UF(C)$ of the unit $\eta$ of the adjunction $F \dashv U$ is a normal epimorphism. The dual statement is also true: the torsion subcategory $\T$ is (regular mono)-coreflective in $\C$. Note that the morphism $\eta_C \colon C \rightarrow UF(C)$ is the arrow $\eta_C \colon C \rightarrow F$ in Definition \ref{def torsion theory} above. Similarly the $C$-component of the counit of the adjunction $V \dashv T$ is the arrow $\epsilon_{C} : (T =) VT(C) \rightarrow C$ of Definition \ref{def torsion theory}.

\subsection*{Effective descent morphisms}
The notion of \emph{effective descent morphism} can be defined in terms of \emph{discrete fibrations} of \emph{internal equivalence relations}, two concepts that we are now going to recall. For more details on the content of this section the interested reader can refer to \cite{Borceux,JST,JK}. Let $\C$ be any category with pullbacks. An \emph{internal equivalence relation} is a diagram 
\begin{equation} \label{diag equivalence relation}
\begin{tikzcd}
R \times_X R \arrow[r,shift left=2.3,"p_{1}"] \arrow[r,shift right=2.3,"p_{2}"'] \arrow[r,"\tau" description]
& R \arrow[r,shift left=2.3,"r_1"] \arrow[r,shift right=2.3,"r_2"']  \arrow[out=125,in=55,loop,looseness=4,"\sigma"]
& X \arrow[l,"\Delta"' description]
\end{tikzcd}
\end{equation}
in $\C$,
where $(R \times_X R, p_1, p_2)$ is defined by the following pullback
\begin{center}
\begin{tikzcd}
R \times_{X} R \arrow[r,"p_{2}"] \arrow[d,"p_{1}"']
& R \arrow[d,"r_1"]\\
R \arrow[r,"r_2"']
& X,
\end{tikzcd}
\end{center}
the morphisms $r_1$ and $r_2$ are jointly monomorphic, and the following identities are satisfied:
\begin{enumerate}
\item $r_1 \cdot \Delta = 1_{X} = r_2 \cdot \Delta$ (reflexivity);
\item $r_1\cdot \sigma = r_2$, and $r_2 \cdot \sigma = r_1$ (symmetry);
\item $r_1 \cdot p_{1} = r_1 \cdot \tau$ and $r_2 \cdot p_2 = r_2 \cdot \tau$ (transitivity).
\end{enumerate}
Of course, when $\C$ is the category $\mathsf{Set}$ of sets and functions, 
$r_1$ and $r_2$ are the first and second projections of the relation $R$, $\Delta$ is the ``diagonal map'' yielding the reflexivity of the relation, $\sigma$ and $\tau$ are the ``symmetry” and the “transitivity” maps, respectively.
In other words, an internal equivalence relation in $\mathsf{Set}$ is just an equivalence relation in the usual sense. More generally, an internal equivalence relation in any variety $\mathbb V$ of universal algebras is a \emph{congruence} \cite{BS}, i.e. an equivalence relation which is also compatible with the operations of the algebraic theory of $\mathbb V$. 
\begin{example}
The kernel pair $(Eq(p),p_1,p_2)$ of a morphism $p \colon E \rightarrow B$  is always an internal equivalence relation in $\C$. Note that, in universal algebra, the kernel pair of a homomorphism is sometimes called its ``kernel'' \cite{BS}. 
\end{example}
From now on, to simplify the notations, an internal equivalence relation in a category $\C$ as in \eqref{diag equivalence relation} will be depicted as follows:
\begin{center}
\begin{tikzcd}
R \arrow[r,shift left,"r_1"] \arrow[r,shift right,"r_2"']
& X.
\end{tikzcd}
\end{center}
A \emph{discrete fibration of internal equivalence relations} from $(R,r_1,r_2)$ to $(R',r'_1,r'_2)$ is given by a couple $(f_0,f_1)$ of arrows in $\C$ such that all the corresponding squares in the diagram
\begin{equation} \label{def discrete fibration diag1}
\begin{tikzcd}
& R \arrow[r,shift left,"r_1"] \arrow[r,shift right,"r_2"']  \arrow[d,"f_1"']
& X  \arrow[d,"f_0"]\\
& R' \arrow[r,shift left,"r'_1"] \arrow[r,shift right, "r'_2"'] 
& X' 
\end{tikzcd}
\end{equation}
commute and such that the diagram
\begin{equation} \label{def discrete fibration diag2}
\begin{tikzcd}
& R \arrow[r,"r_2"]  \arrow[d,"f_1"']
& X \arrow[d,"f_0"]\\
& R' \arrow[r,"r'_2"'] 
& X' 
\end{tikzcd}
\end{equation}
is a pullback.

\begin{remark}
\emph{Note that, by the the symmetry of the relations, the conditions above imply that also the following diagram is a pullback:}
\begin{center}
\begin{tikzcd}
& R \arrow[r,"r_1"]  \arrow[d,"f_1"']
& X \arrow[d,"f_0"]\\
& R' \arrow[r,"r'_1"'] 
& {X'.}
\end{tikzcd}
\end{center}
\end{remark} 
Given a morphism $p \colon E \rightarrow B$, the discrete fibrations of equivalence relations with codomain $Eq(p)$ 
\begin{equation} \label{discrete}
\begin{tikzcd}
R \arrow[r,shift left,"r_1"] \arrow[r,shift right, "r_2"'] \arrow[d, "f_1"']
& F \arrow[d, "f_0"]\\
Eq(p) \arrow[r,shift left,"p_1"] \arrow[r,shift right, "p_2"']
& E
\end{tikzcd}
\end{equation}
are the objects of a category, denoted by $\mathsf{DiscFib}(Eq(p))$, where the morphisms are pairs $(\phi_0, \phi_1)$ of morphisms in $\C$ making the following diagram commute:
\begin{center}
\begin{tikzcd}
R \arrow[rr,shift left] \arrow[rr,shift right]  \arrow[dr,dotted, "\phi_1"] \arrow[dd, "f_1"']
& & F \arrow[dr, dotted,"\phi_0"] \arrow[dd,near end,"f_0"']
& \\
 & R'  \arrow[rr,shift left] \arrow[rr,shift right]  \arrow[dl, "f_1'"']
& & F' \arrow[dl,"f_0'"]\\
Eq(p) \arrow[rr,shift left] \arrow[rr,shift right]
& & E. &
\end{tikzcd}
\end{center}

For a morphism $p \colon E \rightarrow B$ in $\C$, we write $p^{*} \colon \C \downarrow B \rightarrow \C \downarrow E$ for the induced pullback functor along $p$, where $\C \downarrow B$ and $\C \downarrow E$ are the usual slice categories. A morphism $p \colon E \rightarrow B$ is called an \emph{effective descent morphism} when the pullback functor $p^* \colon \C \downarrow B \rightarrow \C \downarrow E$ is \emph{monadic}. Now, this property can also be expressed in terms of discrete fibrations, as follows: $p$ is an effective descent morphism if and only if the functor $K_p \colon \C \downarrow B \rightarrow \mathsf{DiscFib}(Eq(p))$ sending an object $f \colon A \rightarrow B$ in $\C \downarrow B$ to the discrete fibration $(\pi_1,\bar{\pi}_1)$ of equivalence relations
\begin{equation} \label{image-of-K}
\begin{tikzcd}
Eq(\pi_2) \arrow[r,shift left] \arrow[r,shift right] \arrow[d,"\bar{\pi}_1"']
& E \times_B A \arrow[d,"\pi_1"]\\
Eq(p) \arrow[r,shift left] \arrow[r,shift right]
& E,
\end{tikzcd}
\end{equation}
where $\bar{\pi}_1$ is the arrow induced by the universal property of $Eq(p)$ and by the commutativity of \eqref{pullback}, is an equivalence of categories. In a regular category this is equivalent to the following properties \cite{JST}: $p$ is a regular epimorphism and, moreover, for any discrete fibration \eqref{discrete} of equivalence relations with codomain $Eq(p)$, the equivalence relation $R$ is \emph{effective} (i.e. it is a kernel pair).

\subsection*{Factorization systems}

We now recall the link between (reflective) factorization systems and (admissible) Galois structures. For this we mainly follow \cite{Cassidy-Hebert-Kelly, Carboni-Janelidze-Kelly-Pare, Everaert-Gran, Janelidze}, where the reader will find more information about these topics. In this section we shall work in an arbitrary category $\C$.

In order to define the notion of \textit{factorization system}, some notations have to be introduced. For morphisms $e$ and $m$ in $\C$, we write $e \downarrow m$ if there exists, for any commutative square
\begin{center}
\begin{tikzcd}
A \arrow[r,"e"] \arrow[d,"a"'] 
& B \arrow[d,"b"] \arrow[dl,dotted,"\phi"]\\
C \arrow[r,"m"']
& D,
\end{tikzcd}
\end{center}
a unique arrow $\phi : B \rightarrow C$ such that $\phi \cdot e = a$ and $m \cdot \phi = b$. With respect to a given pair $(\E,\M)$ of classes of morphisms in $\C$ one then defines:
\begin{itemize}
\item[$\bullet$] $\E^{\downarrow} = \{ m \in \C \arrowvert e \downarrow m \ \forall e \in \E \}$;
\item[$\bullet$] $\M^{\uparrow} = \{e \in \C \arrowvert e \downarrow m \ \forall m \in \M \}$.
\end{itemize}
\begin{definition}
A \emph{prefactorization system} on the category $\C$ is given by a pair $(\E,\M)$ of classes of morphisms in $\C$ such that $\E = \M^{\uparrow}$ and $\M = \E^{\downarrow}$.
\end{definition}

\begin{definition}
A \emph{factorization system} on $\C$ is a prefactorization system $(\E,\M)$ with the following additional property: for any morphism $f$ in $\C$ there exist morphisms $e \in \E$ and $m \in \M$ such that $f = m\cdot e$.
\end{definition}

Thanks to results from \cite{Cassidy-Hebert-Kelly} we know that, given a full reflective subcategory $\F$ of $\C$
\begin{equation}\label{reflection}
\begin{tikzcd}[column sep = small]
\C \arrow[rr,shift left=0.2cm,"F"] 
& \bot
& \F, \arrow[ll,shift left=0.2cm,"U"]
\end{tikzcd}
\end{equation}
 we then naturally get a prefactorization system $(\E,\M)$ defined as follows:
\begin{itemize}
\item[$\bullet$] $\E = \{f \in \C \, \arrowvert \, F(f) \ \text{is an isomorphism} \}$;
\item[$\bullet$] $\M = \{f \in \C \, \arrowvert \, \text{the following square \eqref{naturality} is a pullback} \}$:
\begin{equation}\label{naturality}
\begin{tikzcd}
A \arrow[r,"\eta_{A}"] \arrow[d,"f"']
& UF(A) \arrow[d,"UF(f)"]\\
B \arrow[r,"\eta_{B}"'] 
& {UF(B),}
\end{tikzcd}
\end{equation}
where $\eta$ is the unit of the adjunction \eqref{reflection}.
\end{itemize}
Moreover, we know that $(\E,\M)$ is a factorization system if the functor $F \colon \C \rightarrow \F$ is \textit{semi-left-exact} in the sense of \cite{Cassidy-Hebert-Kelly}: it preserves all pullbacks of the form
\begin{center}
\begin{tikzcd}
P \arrow[r] \arrow[d]
& U(C) \arrow[d,"U(f)"]\\
B \arrow[r,"\eta_{B}"']
& UF(B),
\end{tikzcd}
\end{center}
where $\eta_{B} : B \rightarrow UF(B)$ is the $B$-component of the unit of the adjunction \eqref{reflection} and $f : C \rightarrow F(B)$ is an arrow in the subcategory $\F$ of $\C$.

In fact a reflection is semi-left-exact if and only if it is \textit{admissible} in the sense of categorical Galois theory \cite{Janelidze} (with respect to the classes of \emph{all} morphisms, as explained in \cite{Carboni-Janelidze-Kelly-Pare}). In this context the morphisms in $\M$ defined above are called \textit{trivial coverings}.

Note that, for a reflector $F \colon \C \rightarrow \F$, there exists a stronger property than being semi-left-exact:

\begin{definition}\cite{Cassidy-Hebert-Kelly} 
A reflector $F : \C \rightarrow \F$ as in \eqref{reflection} has \emph{stable units} when it preserves pullbacks of the form 
\begin{center}
\begin{tikzcd}
P \arrow[r] \arrow[d]
& C \arrow[d,"f"]\\
B \arrow[r,"\eta_{B}"']
& UF(B)
\end{tikzcd}
\end{center}
where $\eta_{B} : B \rightarrow UF(B)$ is the $B$-component of the unit of the adjunction \eqref{reflection} and $f : C \rightarrow UF(B)$ is any arrow in the category $\C$.
\end{definition}
\begin{remark}
\emph{It is well known that, given a torsion theory $(\T,\F)$ in a normal category $\C$, the reflector $F \colon \C \rightarrow \F$ to the torsion-free subcategory has stable units \cite{Everaert-Gran15}. }
\end{remark}

Given the reflection \eqref{reflection} we can define the following two subclasses of morphisms in $\C$:
\begin{itemize}
\item[$\bullet$] $\E' = \{f \in \C \, \arrowvert \, \ \text{the pullback of $f$ along any morphism in $\C$ is in $\E$} \}$;
\item[$\bullet$] $\M^{*} = \{f \in \C \, \arrowvert \, \ \text{there exists an effective descent morphism $p$ such that $p^{*}(f)$ is in $\M$} \}$.
\end{itemize}

Morphisms in $\M^{*}$ are precisely the \textit{coverings} which are the object of study in categorical Galois theory (whenever the reflection \eqref{reflection} is semi-left-exact). In particular, one of the goals of this paper is to describe these coverings in the category of preordered groups, and to show that the pair $(\E', \M^{*})$ is a monotone-light factorization system in the following sense:

\begin{definition}\cite{Carboni-Janelidze-Kelly-Pare} 
A factorization system is said to be \emph{monotone-light} when it is of the form $(\E',\M^{*})$ for some factorization system $(\E,\M)$. 
\end{definition}
We are now ready to state the main result of \cite{Everaert-Gran} (see also \cite{Carboni-Janelidze-Kelly-Pare}), which will be useful later on:
\begin{theorem} \label{thmEG}
Let $\C$ be a normal category. Let $(\T,\F)$ be a torsion theory in $\C$ such that, for any normal monomorphism $k : K \rightarrow A$, the monomorphism $k \cdot \epsilon_{K} : T(K) \rightarrow A$ is normal in $\C$, where $\epsilon_{K} : T(K) \rightarrow K$ is the K-component of the counit $\epsilon$ of the coreflection $\C \rightarrow \T$. We write $(\E,\M)$ for the factorization system associated with the reflector $F : \C \rightarrow \F$, which has stable units. \\
If for any object $C$ in $\C$ there is an effective descent morphism $p : F \rightarrow C$ with $F \in \F$, then $({\E}',{\M}^*)$ is a monotone-light factorization system and, moreover,
\begin{itemize}
\item[$\bullet$] ${\E}'$ is the class of normal epimorphisms in $\C$ whose kernel is in $\T$;
\item[$\bullet$] ${\M}^*$ is the class of morphisms in $\C$ whose kernel is in $\F$. 
\end{itemize} 
\end{theorem}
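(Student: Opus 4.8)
The plan is to exploit the explicit torsion-theoretic description of the two classes throughout, rather than to invoke the abstract stabilization machinery of \cite{Carboni-Janelidze-Kelly-Pare} as a black box. Since the reflector $F$ has stable units it is in particular admissible, so $(\E,\M)$ is a factorization system and $\M$ is the class of trivial coverings, i.e. those $f$ for which the naturality square \eqref{naturality} is a pullback. I shall repeatedly use the standard closure properties of a torsion theory in a normal category: $\F$ is closed under subobjects and extensions, $\T$ is closed under (normal-epi) quotients and extensions, $\T \cap \F = \{0\}$, and an object $X$ lies in $\T$ as soon as $F(X) \cong 0$. The argument has three stages: identify $\M^*$, identify $\E'$, and assemble a factorization.

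First I would pin down $\M^*$ as the morphisms with torsion-free kernel. Kernels are stable under pullback, so $\mathsf{Ker}(p^*(f)) \cong \mathsf{Ker}(f)$ for any $p$; hence it suffices to note that a \emph{trivial} covering $m$ has torsion-free kernel. This is immediate: if the naturality square of $m \colon A \to B$ is a pullback then $\mathsf{Ker}(m) \cong \mathsf{Ker}(UF(m))$, a subobject of $UF(A) \in \F$, whence $\mathsf{Ker}(m) \in \F$ by closure under subobjects. This gives $\M^* \subseteq \{f \mid \mathsf{Ker}(f) \in \F\}$. For the converse I would use the hypothesis: given $\mathsf{Ker}(f) \in \F$, pull $f$ back along the effective descent morphism $p \colon E \to B$ with $E \in \F$. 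The domain $E \times_B A$ is then an extension of a subobject of $E$ (the image of $p^*(f)$) by $\mathsf{Ker}(f)$, both of which lie in $\F$; by closure under extensions $E \times_B A \in \F$, and any morphism between objects of $\F$ is a trivial covering. Thus $p^*(f) \in \M$ and $f \in \M^*$. In particular every monomorphism lies in $\M^*$.

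Next I would identify $\E'$ with the normal epimorphisms having torsion kernel. One inclusion is a direct computation: if $f$ is a normal epimorphism with $\mathsf{Ker}(f) \in \T$ then, since $\mathsf{Ker}(f) \subseteq T(A)$ and $\T$ is closed under quotients, one gets $UF(A) \cong UF(B)$, so $F(f)$ is an isomorphism; as both ``normal epimorphism'' and ``torsion kernel'' are pullback-stable, $f$ is stably in $\E$. For the reverse inclusion, let $f \in \E'$. Pulling back along $0 \to B$ shows $F(\mathsf{Ker}(f)) \cong 0$, i.e. $\mathsf{Ker}(f) \in \T$. To see that $f$ is a normal epimorphism, factor it as $f = i \cdot q$ with $q$ a normal epimorphism and $i$ a monomorphism; pulling the whole factorization back along an arbitrary arrow and using that $q$ has torsion kernel shows that $i$ too is stably in $\E$. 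The key point is then the lemma that \emph{a monomorphism which is stably in $\E$ is an isomorphism}: pulling $i$ back along the cover $p \colon E \to B$ with $E \in \F$ yields a monomorphism between objects of $\F$ which is inverted by $F$, hence an isomorphism (on $\F$ the reflector is an equivalence and so reflects isomorphisms); since $p$ is an effective descent morphism, $p^*$ is monadic and therefore conservative, so $i$ itself is an isomorphism and $f = i \cdot q$ is a normal epimorphism.

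Finally I would produce the factorization and conclude. Given $f \colon A \to B$ with kernel $k \colon K \to A$, the hypothesis makes $k \cdot \epsilon_K \colon T(K) \to A$ a normal monomorphism; its cokernel $e \colon A \to A/T(K)$ is a normal epimorphism with torsion kernel $T(K)$ (so $e \in \E'$), and the induced $m \colon A/T(K) \to B$ has kernel $K/T(K) \cong F(K) \in \F$ (so $m \in \M^*$). Orthogonality $\E' \downarrow \M^*$ is elementary: in a lifting square the torsion kernel of the $\E'$-map maps into the torsion-free kernel of the $\M^*$-map, hence to $0$ by axiom $(a)$, so the top arrow coequalizes that kernel and factors uniquely through the cokernel. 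Together with closure of both classes under composition and isomorphisms (again via the extension-closure of $\T$ and $\F$), this shows $(\E',\M^*)$ is a factorization system, which is monotone-light by definition. I expect the genuine obstacle to be the reverse inclusions of the second and third stages — in particular the lemma that a stably-$\E$ monomorphism is invertible — since these are precisely the places where the effective descent cover with domain in $\F$, and the conservativity of monadic descent, are indispensable; the construction of the factorization is likewise the sole place where the normality hypothesis on $k \cdot \epsilon_K$ is needed.
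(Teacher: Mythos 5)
Your proof is correct, but note that the paper itself offers no proof of this statement: Theorem \ref{thmEG} is quoted as the main result of \cite{Everaert-Gran} (see also \cite{Carboni-Janelidze-Kelly-Pare}), so the comparison is really with that reference. The published argument goes through the localization--stabilization machinery of Carboni--Janelidze--Kelly--Par\'e: one shows the reflection has stable units and that the factorization system $(\E,\M)$ is ``locally stable'' thanks to the effective descent covers with domain in $\F$, then invokes the general criterion guaranteeing that $(\E',\M^*)$ is a (monotone-light) factorization system, and only afterwards identifies the two classes. You instead verify everything by hand -- characterizing $\M^*$ and $\E'$ torsion-theoretically, building the factorization from the normal monomorphism $k\cdot\epsilon_K$, and checking orthogonality directly -- which is more self-contained and has the virtue of isolating exactly where each hypothesis enters: the normality of $k\cdot\epsilon_K$ only in the factorization step, and the cover $p\colon E\to B$ with $E\in\F$ only in the two reverse inclusions, via the conservativity of the monadic functor $p^*$. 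Two of your steps are terser than they should be, though neither is a gap. First, that a normal epimorphism $f$ with torsion kernel is inverted by $F$ is cleanest via the composite $\eta_B\cdot f$: it is a normal epimorphism whose kernel is an extension of $T(B)$ by $\mathsf{Ker}(f)$, hence lies in $\T$ by closure under extensions, so the uniqueness (up to isomorphism) of the canonical short exact sequence forces $F(f)$ to be invertible; your appeal to ``$\mathsf{Ker}(f)\subseteq T(A)$ and closure of $\T$ under quotients'' does not by itself yield $UF(A)\cong UF(B)$. Second, the identification $\mathsf{Ker}(m)\cong K/T(K)\cong F(K)$ in your factorization needs the standard normal-category fact that the direct image of $K=\mathsf{Ker}(f)$ under the cokernel of $T(K)\rightarrowtail A$ is precisely the kernel of the induced morphism $m$; this is a small diagram chase (or an instance of the Noether isomorphism in normal categories) worth recording explicitly.
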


\subsection*{Limits and short exact sequences in $\mathsf{PreOrdGrp}$}

We recall the description of some limits and colimits, and of the short exact sequences in the category of preordered groups \cite{Clementino-Martins-Ferreira-Montoli}. The product of two preordered groups $(G,P_{G})$ and $(H,P_{H})$ is given by the direct product of groups $G \times H$ with the positive cone $P_{G \times H}$ defined by $P_{G \times H} = P_{G} \times P_{H}$. Next, the equalizer of two arrows $$(f,\bar{f}),(g,\bar{g}) : (G,P_{G}) \rightrightarrows (H,P_{H})$$ is built by computing the equalizer $e : E \rightarrow G$ of $f$ and $g$ in $\mathsf{Grp}$, and the positive cone $P_E$ of $E$ is then given by the intersection of $P_{G}$ and $E$, i.e. the pullback of the inclusion morphisms $P_{G} \rightarrow G$ and $E \rightarrow G$ in the category $\mathsf{Mon}$ of monoids. It is then easily seen that the inclusion
$
\begin{tikzcd}
P_{G} \cap E \arrow[r]
& P_{G}
\end{tikzcd}
$
is the equalizer of $\bar{f}$ and $\bar{g}$ in the category $\mathsf{Mon}$ of monoids. Pullbacks and kernels are computed in the same way, by considering pullbacks and kernels ``componentwise'' at each level (group and monoid). A description of colimits is also possible. The coequalizer of $(f,\bar{f})$ and $(g,\bar{g})$ is computed by taking the coequalizer $q : H \rightarrow Q$ of $f$ and $g$ in $\mathsf{Grp}$ and then the direct image of the submonoid $P_H$ along $q$: $P_{Q} = q(P_{H})$. The description of coproducts is more complicated, and it will not be needed for our work.

The description of normal epimorphisms and normal monomorphisms in the category $\mathsf{PreOrdGrp}$ of preordered groups will also be useful. A morphism $(f,\bar{f}) : (G,P_{G}) \rightarrow (H,P_{H})$ in $\mathsf{PreOrdGrp}$ is an epimorphism if and only if $f$ is surjective. It is a normal epimorphism when, moreover, the morphism $\bar{f}$ in \eqref{morphismPreOrd} is also surjective: $P_{H} = f(P_{G})$. Similarly, $(f,\bar{f})$ is a monomorphism in $\mathsf{PreOrdGrp}$ if and only if $f$ is injective (which also implies that $\bar{f}$ is injective). Such a morphism is a normal monomorphism if $f$ is a normal monomorphism in $\mathsf{Grp}$ and $P_{G} = f^{-1} (P_{H})$ (i.e. the square \eqref{morphismPreOrd} is a pullback).

In the next proposition we gather the information from \cite{Clementino-Martins-Ferreira-Montoli} which is useful to describe short exact sequences in the category $\mathsf{PreOrdGrp}$ of preordered groups:

\begin{proposition} \label{kernel-cokernel}
Consider, in $\mathsf{PreOrdGrp}$, a pair of composable arrows as in the following diagram
\begin{center}
\begin{equation} \label{diag SES}
\begin{tikzcd}[row sep=small,column sep=small]
P_{A} \arrow[rr,"\bar{k}"] \arrow[dd,tail,"a"']
& & P_{B} \arrow[rr,"\bar{f}"] \arrow[dd,tail,"b"]
& & P_{C} \arrow[dd,tail,"c"]\\
 & (P) & & & \\
A \arrow[rr,"k"']
& & B \arrow[rr,"f"']
& & C.
\end{tikzcd}
\end{equation}
\end{center}
Then:
\begin{enumerate}
\item the morphism $(k,\bar{k})$ is the kernel of $(f,\bar{f})$ if and only $k$ is the kernel of $f$ in $\mathsf{Grp}$ and the square (P) is a pullback in $\mathsf{Mon}$;
\item the morphism $(f,\bar{f})$ is the cokernel of $(k,\bar{k})$ if and only if $f$ is the cokernel of $k$ in $\mathsf{Grp}$ and $\bar{f}$ is surjective.
\item the sequence \eqref{diag SES} is a short exact sequence in $\mathsf{PreOrdGrp}$ if and only if 
$$ 
\begin{tikzcd}
0 \arrow[r]
& A \arrow[r,"k"] 
& B \arrow[r,"f"] 
& C  \arrow[r]
& 0
\end{tikzcd}
$$
 is a short exact sequence in $\mathsf{Grp}$, (P) is a pullback in $\mathsf{Mon}$, and $\bar{f}$ is surjective.
\end{enumerate} 
\end{proposition}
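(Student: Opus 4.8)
The plan is to establish the kernel characterization (1) and the cokernel characterization (2) independently, drawing on the componentwise descriptions of kernels and cokernels recalled above, and then to read off (3) as their conjunction. For (1), I would use that kernels in $\mathsf{PreOrdGrp}$ are computed componentwise, so that the kernel of $(f,\bar f)$ has underlying group $K = \mathsf{ker}(f)$ (the kernel of $f$ in $\mathsf{Grp}$) and positive cone $K \cap P_B$, i.e.\ the pullback in $\mathsf{Mon}$ of $P_B \hookrightarrow B$ along $K \hookrightarrow B$. Decomposing the universal property level by level, $(k,\bar k)$ is the kernel of $(f,\bar f)$ exactly when $k$ is the kernel of $f$ in $\mathsf{Grp}$ and the positive cone $P_A$ equals $k^{-1}(P_B)$, the latter being precisely the assertion that the square $(P)$ is a pullback in $\mathsf{Mon}$. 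Both implications then follow from the uniqueness of kernels up to isomorphism.

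For (2), I would invoke the description of the cokernel as a coequalizer: the cokernel of $(k,\bar k)$ is obtained by first forming the cokernel $q \colon B \to Q$ of $k$ in $\mathsf{Grp}$ and then endowing $Q$ with the direct-image positive cone $q(P_B)$. Consequently $(f,\bar f)$ is the cokernel of $(k,\bar k)$ if and only if $f$ is the cokernel of $k$ in $\mathsf{Grp}$ and $P_C = f(P_B)$. Since $\bar f$ is nothing but the (co)restriction of $f$ to the positive cones, the equality $P_C = f(P_B)$ says exactly that $\bar f$ is surjective, and again both directions are immediate.

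Finally, (3) follows by combining the two previous items. By definition a short exact sequence in $\mathsf{PreOrdGrp}$ consists of arrows with $(k,\bar k) = \mathsf{ker}(f,\bar f)$ and $(f,\bar f) = \mathsf{coker}(k,\bar k)$. Using (1) and (2), this is equivalent to requiring that $k = \mathsf{ker}(f)$ and $f = \mathsf{coker}(k)$ in $\mathsf{Grp}$---that is, that the bottom row be a short exact sequence of groups---together with $(P)$ being a pullback in $\mathsf{Mon}$ and $\bar f$ surjective.

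I do not expect a serious obstacle: the statement is essentially a careful repackaging of the componentwise (co)limit formulas of \cite{Clementino-Martins-Ferreira-Montoli}. The only points demanding attention are bookkeeping checks that the positive-cone data match up---namely that the positive cone $\{x \in P_B : \bar f(x) = 0\}$ produced by the componentwise kernel coincides with $K \cap P_B = k^{-1}(P_B)$ (which holds because $\bar f$ restricts $f$ and $0 \in P_C$), and dually that the direct-image positive cone of the cokernel corresponds exactly to surjectivity of $\bar f$. As a consistency check one may compare the outcome with the descriptions of normal monomorphisms and normal epimorphisms in $\mathsf{PreOrdGrp}$ recalled just before the statement.
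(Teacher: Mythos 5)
Your argument is correct: the paper states this proposition without proof, presenting it as material gathered from \cite{Clementino-Martins-Ferreira-Montoli}, and your reasoning is precisely the expected unpacking of the componentwise descriptions of kernels (intersection/pullback of positive cones) and cokernels (direct image of the positive cone) recalled immediately before the statement. The bookkeeping points you flag (that $\{x \in P_B : \bar f(x)=0\}$ equals $k^{-1}(P_B)$, and that the direct-image cone condition is exactly surjectivity of $\bar f$) are the only places needing care, and you handle them correctly.
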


The category $\mathsf{PreOrdGrp}$ of preordered groups is \emph{normal}, as observed in \cite{Clementino-Martins-Ferreira-Montoli}, where it is also proved that a morphism in $\mathsf{PreOrdGrp}$ is \emph{effective for descent} if and only if it is a normal epimorphism (or, equivalently, if and only if it is a regular epimorphism).

\subsection*{Schreier points and special Schreier morphisms in monoids}

As we saw in the introduction the category of preordered groups is equivalent to the one whose objects are pairs $(G,M)$ where $G$ is a group and $M$ is a submonoid of $G$ closed under conjugation. While the category of groups is protomodular, which means that the Split Short Five Lemma holds in $\mathsf{Grp}$, this is not the case for the category of monoids \cite{BB}. Moreover, actions in monoids are not equivalent to split extensions of monoids (while this is the case for groups). Nevertheless, we can restrict our attention to a class of points (a ``point'' $(A,B,p,s)$ being a split epimorphism $p \colon A \rightarrow B$ with fixed section $s \colon B \rightarrow A$) in $\mathsf{Mon}$, called \textit{Schreier points} (or, equivalently, \textit{Schreier split epimorphisms}) \cite{BMFMS}, which have a behavior which is quite similar to the ones in the category of groups. The class of Schreier points corresponds to monoid actions, and it was shown in \cite{BMFMS} that the Split Short Five Lemma does hold for such points. 

\begin{definition}
A \emph{Schreier point} in the category $\mathsf{Mon}$ of monoids is a point $(A,B,p,s)$ such that for any element $a$ in $A$ there exists a unique element $x$ in the kernel $\mathsf{Ker}(p)$ of $p$ such that
\[a = x + (s \cdot p) (a).\]
\end{definition} 

This kind of points are useful to ``locally'' extend some classical properties of split extensions of groups to the context of monoids. We shall not develop these interesting aspects here, but we refer the reader to \cite{BMFMS} for a thorough introduction to this subject. What will be of interest for the purpose of this paper is to briefly recall the properties of \textit{special Schreier morphisms} in the category of monoids.

\begin{definition} \hspace*{3cm}
\begin{itemize}
\item An internal reflexive relation in the category $\mathsf{Mon}$ of monoids
\begin{center}
\begin{tikzcd}
R \arrow[r,shift left=2.3,"r_{1}"] \arrow[r,shift right=2.3,"r_{2}"']
& A \arrow[l,"s"' description]
\end{tikzcd}
\end{center}
is said to be a \emph{Schreier reflexive relation} when the point $(R,A,r_{1},s)$ is a Schreier one.
\item A morphism $f : A \rightarrow B$ in the category $\mathsf{Mon}$ of monoids is said to be a \emph{special Schreier morphism} when its kernel pair
\begin{center}
\begin{tikzcd}[column sep = large]
Eq(f) \arrow[r,shift left=3.5,"f_{1}"] \arrow[r,shift right=3.5,"f_{2}"']
& A \arrow[l,"{(1,1)}"' description]
\end{tikzcd}
\end{center}
is a Schreier reflexive relation, where $(1,1) \colon A \rightarrow Eq(f)$ is such that $f_1\cdot (1,1)= 1=f_2  \cdot (1,1)$.
\end{itemize}
\end{definition}

It is then possible to prove \cite{BMFMS} that any surjective special Schreier morphism $f \colon A \rightarrow B$ is the cokernel of its kernel. Accordingly, we get an extension of monoids:
\begin{center}
\begin{tikzcd}
0 \arrow[r]
& \mathsf{Ker}(f) \arrow[r,"k"]
& A \arrow[r,"f"]
& B \arrow[r]
&0.
\end{tikzcd}
\end{center}
These \textit{special Schreier extensions} satisfy some remarkable properties. The following proposition states two of them, which will be needed for our future investigations:

\begin{proposition} \label{proposition S extensions} \cite{BMFMS} \hspace*{3cm}
\begin{enumerate}
\item Special Schreier extensions are pullback stable in $\mathsf{Mon}$.
\item The \emph{Short Five Lemma} holds for special Schreier extensions. This means that given any commutative diagram \eqref{generic-ses} of short exact sequences in $\mathsf{Mon}$, where $f$ and $f'$ are special Schreier morphisms, and $a$ and $c$ are isomorphisms, then $b$ is also an isomorphism.
\end{enumerate}
\end{proposition}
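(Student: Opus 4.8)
The plan is to reduce the abstract definition to an elementary, element-wise criterion and then run two short diagram chases. Unwinding the Schreier condition for the kernel pair $Eq(f)$ of a morphism $f\colon B\to C$ in $\mathsf{Mon}$, equipped with its diagonal section, one sees that $f$ is a special Schreier morphism precisely when, for every pair $x,y\in B$ with $f(x)=f(y)$, there is a \emph{unique} element $k\in\mathsf{Ker}(f)$ such that $y=\kappa(k)+x$, where $\kappa\colon\mathsf{Ker}(f)\to B$ is the kernel inclusion. This is the formulation I would use throughout: it is the exact monoid-theoretic surrogate for the ``difference'' $y-x$ that is available in the group case. Once this characterization is in place, both statements become bookkeeping.

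For \textbf{(1)}, I would start from a surjective special Schreier morphism $f\colon B\to C$ with kernel inclusion $\kappa\colon A\to B$ and an arbitrary $g\colon C'\to C$, and form the pullback $P=B\times_C C'=\{(x,c')\mid f(x)=g(c')\}$ with projection $\bar f\colon P\to C'$, $(x,c')\mapsto c'$. Surjectivity of $\bar f$ follows from that of $f$, and one computes $\mathsf{Ker}(\bar f)=\{(\kappa(k),0)\mid k\in A\}\cong A$ directly. The heart of the step is the element-wise Schreier condition for $\bar f$: two elements $(x_1,c')$ and $(x_2,c')$ in the same $\bar f$-fibre satisfy $f(x_1)=g(c')=f(x_2)$, so the Schreier property of $f$ yields a unique $k\in A$ with $x_2=\kappa(k)+x_1$; then $(x_2,c')=(\kappa(k),0)+(x_1,c')$ with $(\kappa(k),0)\in\mathsf{Ker}(\bar f)$, and the uniqueness transfers verbatim. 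Since a surjective special Schreier morphism is the cokernel of its kernel (as recalled just before the statement), this exhibits $0\to A\to P\xrightarrow{\bar f}C'\to 0$ as a special Schreier extension.

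For \textbf{(2)}, working with the diagram \eqref{generic-ses} in which $f,f'$ are special Schreier and $a,c$ are isomorphisms, I would show that $b$ is both injective and surjective and then invoke that a bijective monoid homomorphism is an isomorphism. For injectivity: from $b(x)=b(y)$ I get $f(x)=f(y)$ (as $c$ is mono), so the Schreier property of $f$ gives a unique $k\in A$ with $y=\kappa(k)+x$; applying $b$ and using $b\kappa=\kappa'a$ rewrites this as $b(y)=\kappa'(a(k))+b(x)$, and comparing with the trivial decomposition $b(y)=b(x)=\kappa'(0)+b(x)$ of the \emph{same} element in the \emph{same} $f'$-fibre, the uniqueness clause for $f'$ forces $a(k)=0$, hence $k=0$ (as $a$ is mono) and $x=y$. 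For surjectivity: given $x'\in B'$, I lift $f'(x')$ through the iso $c$ and then through the epimorphism $f$ to some $x_0\in B$ with $f'(b(x_0))=c(f(x_0))=f'(x')$; the Schreier property of $f'$ produces a unique $k'\in A'$ with $x'=\kappa'(k')+b(x_0)$, and lifting $k'$ through the iso $a$ to some $k\in A$ gives $x'=\kappa'(a(k))+b(x_0)=b(\kappa(k)+x_0)$, so $x'\in\mathrm{Im}(b)$.

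The only real subtlety, and the point I would keep in the foreground, is that $\mathsf{Mon}$ has neither subtraction nor cancellation, so the classical group-theoretic five-lemma manipulations are unavailable; in particular one cannot simply cancel $b(x)$ from $\kappa'(a(k))+b(x)=b(x)$ to conclude $a(k)=0$. What rescues the argument is the \emph{uniqueness} half of the Schreier condition, which plays exactly the role that cancellation plays over groups: it pins down the decomposition of a fibre-element relative to a fixed base point, and comparing two such decompositions is precisely what yields $a(k)=0$. Getting each of these uniqueness comparisons right is the main obstacle; everything else is a routine transcription of the element-wise criterion.
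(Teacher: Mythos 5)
Your proof is correct. Note that the paper itself gives no proof of this proposition: it is imported verbatim from the cited reference [BMFMS], so there is no internal argument to compare against. Your element-wise unwinding of the Schreier condition on the kernel pair $(Eq(f),f_1,f_2)$ with diagonal section --- namely that $f$ is special Schreier iff for all $x,y$ with $f(x)=f(y)$ there is a unique $k\in\mathsf{Ker}(f)$ with $y=\kappa(k)+x$ --- is exactly the right reformulation (the kernel of the first projection is $\{(0,v):v\in\mathsf{Ker}(f)\}$, and the displayed equation $(x,y)=(0,v)+(x,x)$ gives $y=v+x$), and both diagram chases go through as you describe; in particular your observation that the uniqueness clause substitutes for the missing cancellation in $\mathsf{Mon}$ is the essential point, and it is also how the result is proved in [BMFMS]. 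One small wording correction: in the surjectivity step of (2) you lift through ``the epimorphism $f$''; epimorphisms of monoids need not be surjective (e.g.\ $\mathbb{N}\hookrightarrow\mathbb{Z}$), so you should say instead that $f$ is surjective because, as the cokernel of $\kappa$ in the short exact sequence, it is a regular epimorphism. This does not affect the validity of the argument.
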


\section{Coverings in the category of preordered groups}

\begin{proposition} \label{torsion OrdGrp}
The pair of full (replete) subcategories \emph{($\mathsf{Grp},\mathsf{ParOrdGrp}$)} of $\mathsf{PreOrdGrp}$ is a torsion theory in the normal category $\mathsf{PreOrdGrp}$.
\end{proposition}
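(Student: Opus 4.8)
The plan is to verify directly the two defining conditions $(a)$ and $(b)$ of a torsion theory from Definition \ref{def torsion theory}, using the explicit descriptions of kernels, cokernels and short exact sequences in $\mathsf{PreOrdGrp}$ recalled in Proposition \ref{kernel-cokernel}. The two subcategories are full and replete by their very definition, so all the content lies in $(a)$ and $(b)$.

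For condition $(a)$, I would take an object $(T,T) \in \mathsf{Grp}$, an object $(F,P_F) \in \mathsf{ParOrdGrp}$, and an arbitrary morphism $(f,\bar{f}) \colon (T,T) \to (F,P_F)$. Since the positive cone of the domain is the whole group $T$, the commutativity of \eqref{morphismPreOrd} forces $f(T) \subseteq P_F$. But $f(T)$ is a \emph{subgroup} of $F$, so for every $g \in f(T)$ both $g$ and $-g$ lie in $P_F$ and satisfy $g + (-g) = 0$; the reducedness of $P_F$ then gives $g = 0$. Hence $f$ is the zero group morphism and $(f,\bar{f})$ is the zero arrow.

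For condition $(b)$, given an arbitrary $(C,P_C)$, I would set $T := P_C \cap (-P_C)$, the symmetric part of the positive cone. First I would check that $T$ is a subgroup of $C$ closed under conjugation, hence normal: it is a submonoid because $P_C$ is one, it is stable under inverses by its symmetric definition, and it is conjugation-closed because $P_C$ is and conjugation commutes with taking inverses. Then, writing $f \colon C \to C/T$ for the quotient group morphism and equipping $C/T$ with the positive cone $f(P_C)$ (as prescribed for cokernels), I would use Proposition \ref{kernel-cokernel} to show that
\[
\begin{tikzcd}[column sep=small]
0 \arrow[r] & (T,T) \arrow[r,"{(k,\bar{k})}"] & (C,P_C) \arrow[r,"{(f,\bar{f})}"] & (C/T,\, f(P_C)) \arrow[r] & 0
\end{tikzcd}
\]
is a short exact sequence: $k$ and $f$ are kernel and cokernel of each other in $\mathsf{Grp}$ because $T$ is a normal subgroup, the relevant monoid square is a pullback since $T \subseteq P_C$ gives $P_C \cap T = T = P_T$, and $\bar{f}$ is surjective by the definition of the codomain's positive cone.

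The one step demanding genuine care, and the main obstacle, is verifying that the quotient lies in $\mathsf{ParOrdGrp}$, i.e. that $f(P_C)$ is reduced. I would take $x,y \in P_C$ with $f(x) + f(y) = 0$ and aim to conclude $x,y \in T$. From $f(x+y) = 0$ one gets $x + y \in T$, so $-(x+y) = (-y) + (-x) \in P_C$; adding $y \in P_C$ on the left yields $y + \bigl((-y)+(-x)\bigr) = -x \in P_C$, whence $x \in P_C \cap (-P_C) = T$, and symmetrically $y \in T$. Here one must track the order of the summands, since $C$ need not be abelian. This gives $f(x) = f(y) = 0$, so $f(P_C)$ is reduced, which completes condition $(b)$ and the proof.
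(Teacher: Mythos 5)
Your proof is correct and follows essentially the same route as the paper's: your $T = P_C \cap (-P_C)$ is exactly the subgroup $N_G$ used there, the verification of condition $(a)$ via reducedness of the codomain's cone is the same argument, and the reducedness computation $y + \bigl((-y)+(-x)\bigr) = -x \in P_C$ is identical to the paper's $x' - (x+x') = -x \in P_G$. No gaps.
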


\begin{proof}
Let us first show that the only arrow in $\mathsf{PreOrdGrp}$ from an object of $\mathsf{Grp}$ to an object of $\mathsf{ParOrdGrp}$ is the zero morphism. Consider an arrow $(f,\bar{f}) : (G,G) \rightarrow (H,P_{H})$ in $\mathsf{PreOrdGrp}$, with $(G,G)$ in $\mathsf{Grp}$ and $(H,P_{H})$ in $\mathsf{ParOrdGrp}$:
\begin{center}
\begin{tikzcd}
G \arrow[r,"\bar{f}"] \arrow[d,equal]
& P_{H} \arrow[d]\\
G \arrow[r,"f"']
& H.
\end{tikzcd}
\end{center}
For any $x \in G$, its opposite $-x$ is also in $G$,  and
\[0 = \bar{f}(x-x) = \bar{f}(x) + \bar{f}(-x),\]
with $\bar{f}(x),\bar{f}(-x) \in P_{H}$, and $P_{H}$ is a reduced monoid. This implies that $\bar{f}(x) = \bar{f}(-x) = 0$, $\bar{f} = 0$, and then $f = 0$. 

Consider then an object $(G,P_{G})$ of $\mathsf{PreOrdGrp}$, and define
\[N_{G} = \{n \in G \ \arrowvert \ n \in P_{G} \ \text{and} \ -n \in P_{G}\}.\]
It is a normal subgroup of $G$: indeed, if $n \in N_{G}$ and $x \in G$, then we have that $x+n-x \in P_{G}$ and $-(x+n-x) = x-n-x \in P_{G}$, since the submonoid $P_{G}$ is closed under conjugation in $G$. Accordingly, the sequence
\begin{tikzcd}
N_{G} \arrow[r,tail,"k_{G}"] 
& G \arrow[r,two heads,"\eta_{G}"]
& G/N_{G}
\end{tikzcd}
is a short exact sequence in the category $\mathsf{Grp}$ of groups. Consider next the direct image factorization of the morphism $\eta_{G}\cdot g$ in the category $\mathsf{Mon}$ of monoids, where 
\begin{tikzcd}
P_{G} \arrow[r,tail,"g"]
& G
\end{tikzcd}
is the inclusion: 
\begin{center}
\begin{tikzcd}
P_{G} \arrow[r,two heads,"\bar{\eta}_{G}"] \arrow[d,tail,"g"']
& \eta_{G}(P_{G}) \arrow[d,tail,"\psi_{G}"]\\
G \arrow[r,two heads,"\eta_{G}"']
& G/N_{G}.
\end{tikzcd}
\end{center}
Let us now prove that the sequence
\begin{center}
\begin{equation} \label{diagram sequence}
\begin{tikzcd}
N_{G} \arrow[r,tail,"\bar{k}_{G}"] \arrow[d,equal]
& P_{G} \arrow[r,two heads,"\bar{\eta}_{G}"] \arrow[d,tail,"g"']
& \eta_{G}(P_{G}) \arrow[d,tail,"\psi_{G}"]\\
N_{G} \arrow[r,tail,"k_{G}"']
& G \arrow[r,two heads,"\eta_{G}"']
& G/N_{G}
\end{tikzcd}
\end{equation}
\end{center}
is exact in the category $\mathsf{PreOrdGrp}$ of preordered groups. This follows from Proposition \eqref{kernel-cokernel}, since the left-hand square in \eqref{diagram sequence} is clearly a pullback in $\mathsf{Mon}$, the lower sequence is exact, and the morphism $\bar{\eta}_{G}$ is surjective by construction.

It is obvious that $(N_{G},N_{G}) \in \mathsf{Grp}$, so that the proof will be complete if we show that $(G/N_{G},\eta_{G}(P_{G}))$ is in $\mathsf{ParOrdGrp}$. Now, if $y + z = 0$ for $y,z \in \eta_{G}(P_{G})$, then there exist $x,x' \in P_{G}$ such that $\eta_{G}(x) = y$ and $\eta_{G}(x') = z$, so that $\eta_{G}(x + x') = y + z = 0$, that is $x + x' \in N_{G}$. Since $N_{G}$ is a group and $P_{G}$ is a monoid it follows that
\[-x = x' - x' - x = x' - (x + x') \in P_{G},\]
hence $x \in N_{G}$, which implies that $y = \eta_{G}(x) = 0$ and therefore $z = 0$. Accordingly, the submonoid $\eta_{G}(P_{G})$ is reduced, and $(G/N_{G},\eta_{G}(P_{G}))$ a partially ordered group.  
\end{proof}

\begin{remark}
\emph{Note that a similar result can be proved in the category of commutative monoids, as observed in \cite{Facchini} (Section 2.3): the pair $(\mathsf{Ab},\mathsf{RedCMon})$ is a torsion theory in the category $\mathsf{CMon}$ of commutative monoids, where we write $\mathsf{Ab}$ for the category of abelian groups and $\mathsf{RedCMon}$ for the category of reduced commutative monoids.}
\end{remark}

As a consequence of Proposition \ref{torsion OrdGrp} we get the following result:

\begin{corollary} \label{corollary adjunction} \hspace*{3cm}
\begin{itemize}
\item The category $\mathsf{ParOrdGrp}$ is reflective in the category $\mathsf{PreOrdGrp}$
\begin{equation}\label{main-adj}
\begin{tikzcd}[column sep = tiny]
\mathsf{PreOrdGrp} \arrow[rr,shift left=0.2cm,"F"]
& \bot
& \mathsf{ParOrdGrp}, \arrow[ll,shift left=0.2cm,hook',"U"]
\end{tikzcd}
\end{equation}
and each component of the unit $\eta$ of the adjunction (as in \eqref{diagram sequence}) is a normal epimorphism.
\item The category $\mathsf{Grp}$ is coreflective in $\mathsf{PreOrdGrp} $ and each component of the counit $\kappa$ of the adjunction (as in \eqref{diagram sequence}) is a normal monomorphism.
\end{itemize}
\end{corollary}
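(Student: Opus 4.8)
The plan is to read this off directly from Proposition~\ref{torsion OrdGrp} together with the general facts about torsion theories in normal categories recorded after Definition~\ref{def torsion theory}. Since Proposition~\ref{torsion OrdGrp} establishes that $(\mathsf{Grp},\mathsf{ParOrdGrp})$ is a torsion theory in the normal category $\mathsf{PreOrdGrp}$, the construction of the induced functors $F\colon\mathsf{PreOrdGrp}\to\mathsf{ParOrdGrp}$ and $T\colon\mathsf{PreOrdGrp}\to\mathsf{Grp}$ applies verbatim, yielding the reflection $F\dashv U$ and the coreflection $V\dashv T$. So the real content of the corollary is only the announced form of the unit and counit components, and I would treat the two bullets separately, in each case exploiting the explicit short exact sequence \eqref{diagram sequence} produced in the proof of Proposition~\ref{torsion OrdGrp}.

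For the reflective part, I would note that the $C$-component $\eta_C$ of the unit is, by construction, the cokernel map $\eta_C\colon C\to F$ of the defining short exact sequence of Definition~\ref{def torsion theory}$(b)$; being a cokernel, it is automatically a normal epimorphism. To make this concrete for $C=(G,P_G)$, it is the morphism $(\eta_G,\bar{\eta}_G)$ forming the upper row of \eqref{diagram sequence}, and it is visibly a normal epimorphism because $\eta_G$ is surjective and $\bar{\eta}_G$ is surjective onto $\eta_G(P_G)$, which is exactly the criterion for normal epimorphisms in $\mathsf{PreOrdGrp}$ recalled in the preliminaries.

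For the coreflective part, the $C$-component $\kappa_C$ of the counit is, again by construction, the kernel map $\epsilon_C\colon T\to C$ of the same short exact sequence, hence a normal monomorphism. Explicitly this is $(k_G,\bar{k}_G)\colon (N_G,N_G)\to(G,P_G)$, the left-hand part of \eqref{diagram sequence}: the group map $k_G$ is a normal monomorphism in $\mathsf{Grp}$ since $N_G$ is a normal subgroup, and the defining square is a pullback in $\mathsf{Mon}$ because $N_G\subseteq P_G$ forces $k_G^{-1}(P_G)=N_G$. By the componentwise description of normal monomorphisms in $\mathsf{PreOrdGrp}$, this makes $(k_G,\bar{k}_G)$ a normal monomorphism.

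I do not expect any genuine obstacle: the statement is a formal consequence of Proposition~\ref{torsion OrdGrp} and the general torsion-theoretic machinery. The only point deserving a word of care is that the counit component is a \emph{normal} (rather than merely regular) monomorphism; this is immediate because it is literally realised as a kernel in the defining short exact sequence, so that Lemma~\ref{property-normal} and the componentwise description of kernels in $\mathsf{PreOrdGrp}$ suffice, with no further argument required.
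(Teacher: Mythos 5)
Your proposal is correct and follows essentially the same route as the paper, which simply deduces the corollary from Proposition \ref{torsion OrdGrp} together with the general fact (cited from Janelidze--Tholen and Bourn--Gran, and recalled after Definition \ref{def torsion theory}) that a torsion theory in a normal category yields a (normal epi)-reflection onto $\F$ and a (normal mono)-coreflection onto $\T$. Your explicit identification of the unit and counit components with the two halves of \eqref{diagram sequence}, and the verification via the componentwise criteria for normal epimorphisms and monomorphisms in $\mathsf{PreOrdGrp}$, is accurate but only spells out what the paper leaves to the cited general result.
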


\begin{proof}
This follows from the Proposition \ref{torsion OrdGrp} and the (only) Proposition in \cite{JT} (see also \cite{Bourn-Gran}, and \cite{CDT}).
\end{proof}

We now make some useful comments on the short exact sequence \eqref{diagram sequence} constructed in the proof of Proposition \ref{torsion OrdGrp}:

\begin{lemma} \label{lemma M}
Consider the following commutative diagram in the category $\mathsf{Mon}$ of monoids, where the two rows are special Schreier extensions:
\begin{equation} \label{diagram iso pb}
\begin{tikzcd}
0 \arrow[r]
& \mathsf{Ker}(f) \arrow[r,"k"] \arrow[d,"a"']
& A \arrow[r,"f"] \arrow[d,"b"']
& B \arrow[r] \arrow[d,"c"]
& 0\\
0 \arrow[r]
&  \mathsf{Ker}(f') \arrow[r,"k'"'] 
& A' \arrow[r,"f'"']
& B' \arrow[r]
&0.
\end{tikzcd}
\end{equation}
If the morphism $a$ is an isomorphism, then the right-hand square of diagram \eqref{diagram iso pb} is a pullback.
\end{lemma}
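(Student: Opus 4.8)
The plan is to reduce the statement to the Short Five Lemma for special Schreier extensions (Proposition \ref{proposition S extensions}(2)) by comparing the top extension with the canonical pullback extension over $B$. First I would form, in $\mathsf{Mon}$, the pullback $P = A' \times_{B'} B$ of $f'$ along $c$, with projections $\pi_{A'} \colon P \to A'$ and $\pi_{B} \colon P \to B$. Since the right-hand square of \eqref{diagram iso pb} commutes, the pair $(b,f)$ satisfies $f' \cdot b = c \cdot f$ and hence induces, by the universal property of the pullback, a unique morphism $\theta \colon A \to P$ with $\pi_{B} \cdot \theta = f$ and $\pi_{A'} \cdot \theta = b$. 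Proving that the right-hand square is a pullback is exactly proving that $\theta$ is an isomorphism.

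Next I would check that the lower row $0 \to \mathsf{Ker}(f') \to P \xrightarrow{\pi_B} B \to 0$ is itself a special Schreier extension. This is where Proposition \ref{proposition S extensions}(1) is essential: $\pi_B$ is the pullback of the special Schreier morphism $f'$ along $c$, so it is again a (surjective) special Schreier morphism, and the pulled-back sequence is a special Schreier extension. A direct computation identifies its kernel: an element $(a',\beta) \in P$ lies in $\mathsf{Ker}(\pi_B)$ iff $\beta = 0$, forcing $a' \in \mathsf{Ker}(f')$, so $\mathsf{Ker}(\pi_B) \cong \mathsf{Ker}(f')$. I would then verify that the morphism $\mathsf{Ker}(f) \to \mathsf{Ker}(\pi_B)$ induced by $\theta$ is precisely $a$: for $x \in \mathsf{Ker}(f)$ one has $\theta(k(x)) = (b(k(x)), f(k(x))) = (k'(a(x)), 0)$, using $f \cdot k = 0$ together with the commutativity $b \cdot k = k' \cdot a$ of the left-hand square.

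This produces a commutative diagram of two special Schreier extensions whose vertical components are $a$ on the kernels, $\theta$ in the middle, and $1_B$ on the quotients. Since $a$ is an isomorphism by hypothesis and $1_B$ is trivially one, the Short Five Lemma for special Schreier extensions (Proposition \ref{proposition S extensions}(2)) yields that $\theta$ is an isomorphism. As $\theta$ identifies $(A, b, f)$ with the pullback $(P, \pi_{A'}, \pi_B)$, the right-hand square of \eqref{diagram iso pb} is a pullback, as desired.

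The main obstacle is not a hard computation but the careful bookkeeping forced by working in $\mathsf{Mon}$: this category is not protomodular, so neither the usual Short Five Lemma nor Lemma \ref{property-normal}(2) is available, and one must remain inside the class of special Schreier extensions throughout the argument. Concretely, the two points that genuinely require the hypotheses are the verification that the comparison row over $B$ is again a special Schreier extension (which rests on pullback stability, Proposition \ref{proposition S extensions}(1)) and the identification of the induced kernel morphism with $a$; once these are in place, the conclusion follows formally from the Short Five Lemma.
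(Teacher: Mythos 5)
Your proposal is correct and follows essentially the same route as the paper's proof: form the pullback $P = A'\times_{B'}B$, use pullback stability of special Schreier extensions to see that the comparison row over $B$ is again special Schreier, identify the induced kernel morphism with $a$, and conclude by the Short Five Lemma for special Schreier extensions that the comparison map $A \to P$ is an isomorphism. The only cosmetic difference is that you identify $\mathsf{Ker}(\pi_B)\cong\mathsf{Ker}(f')$ by an elementwise computation, whereas the paper factors $a$ through $\mathsf{Ker}(\pi_B)$ using universal properties; both are sound.
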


\begin{proof}
Let us consider the pullback $(P,p_{A'},p_{B})$ of $f'$ and $c$, and let $(\mathsf{Ker}(p_{B}),k'')$ be the kernel of $p_{B}$.
\begin{center}
\begin{tikzcd}
0 \arrow[rr] 
& 
&  \mathsf{Ker}(f) \arrow[rr,"k"] \arrow[dd,"a"'] \arrow[dr,dotted,"\gamma"]
& 
& A \arrow[rr,"f"] \arrow[dd,near start,"b"'] \arrow[dr,dotted,"\phi"]
&
& B \arrow[rr] \arrow[dd,near start,"c"] \arrow[dr,equal]
&
& 0\\
 & & &  \mathsf{Ker}(p_{B}) \arrow[rr,near end,"k''"] \arrow[dl,dotted,"\psi"]
& & P \arrow[rr,near end,"p_{B}"] \arrow[dl,"p_{A'}"]
& & B \arrow[dl,"c"]
& \\
0 \arrow[rr]
&
&  \mathsf{Ker}(f')\arrow[rr,"k'"']
&
& A' \arrow[rr,"f'"']
&
& B' \arrow[rr]
&
& 0
\end{tikzcd}
\end{center} 
We are going to show that the arrow $\phi$ induced by the universal property of the pullback $(P,p_{A'},p_{B})$ is an isomorphism. By the universal property of the kernel $k' = \ker(f')$ we first get the morphism $\psi$ such that $k' \cdot \psi = p_{A'}\cdot k''$. In the same way the universal property of the kernel $k'' = \ker(p_{B})$ gives a unique arrow $\gamma$ such that $k'' \cdot \gamma = \phi \cdot k$. It is easily seen that $\psi \cdot \gamma = a$, with $a$ an isomorphism by assumption. In addition, $\psi$ is also an isomorphism since $(P,p_{A'},p_{B})$ is a pullback, so that $\gamma$ is itself an isomorphism. We have that the bottom row is a special Schreier extension, hence by the pullback stability of special Schreier extensions (Proposition \ref{proposition S extensions}) we get that the middle row of the above diagram is a special Schreier extension. Again by Proposition \ref{proposition S extensions} (second assertion) we  apply the Short Five Lemma to the diagram
\begin{center}
\begin{tikzcd}
0 \arrow[r]
&  \mathsf{Ker}(f) \arrow[r,"k"] \arrow[d,"\gamma"']
& A \arrow[r,"f"] \arrow[d,"\phi"']
& B \arrow[r] \arrow[d,equal]
& 0\\
0 \arrow[r]
&  \mathsf{Ker}(p_{B}) \arrow[r,"k''"'] 
& P \arrow[r,"p_{B}"']
& B \arrow[r]
&0,
\end{tikzcd}
\end{center}
to conclude that $\phi$ is an isomorphism, that is the right-hand square in the diagram \eqref{diagram iso pb} is a pullback.
\end{proof}

\begin{corollary} 
Consider a short exact sequence in $\mathsf{PreOrdGrp}$ of the following form:
\begin{center}
\begin{equation} \label{diag SES pb}
\begin{tikzcd}
0 \arrow[r]
& K \arrow[r,"\bar{k}"] \arrow[d,equal]
& P_{G} \arrow[r,"\bar{f}"] \arrow[d,tail]
& P_{H} \arrow[d,tail] \arrow[r] & 0 \\
 0 \arrow[r]
& K \arrow[r,"k"']
& G \arrow[r,"f"']
& H \arrow[r] & 0
\end{tikzcd}
\end{equation}
\end{center}
Then the upper sequence is a special Schreier extension, and the right-hand square in \eqref{diag SES pb} is a pullback in the category $\mathsf{Mon}$ of monoids.
\end{corollary}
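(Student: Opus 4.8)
The plan is to prove the two assertions in turn: first establish that $\bar{f} \colon P_{G} \to P_{H}$ is a (surjective) special Schreier morphism, which immediately makes the upper row a special Schreier extension, and then feed this into Lemma \ref{lemma M} to obtain the pullback. Before anything else I would unpack the hypotheses via Proposition \ref{kernel-cokernel}(3): since \eqref{diag SES pb} is a short exact sequence in $\mathsf{PreOrdGrp}$ whose kernel object is $(K,K) \in \mathsf{Grp}$, the bottom row is a short exact sequence of groups, the morphism $\bar{f}$ is surjective, and the left-hand square is a pullback in $\mathsf{Mon}$. Because its left leg is the identity, this last fact says precisely that $K \subseteq P_{G}$ as submonoids of $G$; consequently $\mathsf{Ker}(\bar{f}) = \{p \in P_{G} : f(p) = 0\} = K \cap P_{G} = K$, so the upper row is already a short exact sequence of monoids with kernel $K$.

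For the first assertion I would verify that the point $(Eq(\bar{f}), P_{G}, f_{1}, (1,1))$ is Schreier, where $\mathsf{Ker}(f_{1}) = \{(0,y) : y \in K\}$. Given $(p_{1},p_{2}) \in Eq(\bar{f})$, so that $f(p_{1}) = f(p_{2})$ in $H$, the required decomposition $(p_{1},p_{2}) = (0,y) + (p_{1},p_{1})$ forces $y = p_{2} - p_{1}$, the difference being computed in the ambient group $G$. The crucial point is that this $y$ lies in $K$: indeed $f(p_{2}-p_{1}) = 0$, and since $K \subseteq P_{G}$ we even have $y \in P_{G}$, so the whole decomposition takes place inside the monoid $P_{G}$. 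Uniqueness of $y$ is immediate by cancellation in $G$. Hence $\bar{f}$ is a special Schreier morphism; being also surjective, it is the cokernel of its kernel, and therefore the upper row of \eqref{diag SES pb} is a special Schreier extension.

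For the second assertion I would observe that the bottom row, read in $\mathsf{Mon}$, is itself a special Schreier extension: the same Schreier computation applies to the surjective group morphism $f$, this time with no positivity constraint, since $G$ is a group. Thus \eqref{diag SES pb} becomes a commutative diagram of special Schreier extensions of monoids whose induced map on kernels is $\mathrm{id}_{K}$, an isomorphism. Lemma \ref{lemma M} then applies verbatim and yields that the right-hand square of \eqref{diag SES pb} is a pullback in $\mathsf{Mon}$.

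The main obstacle is the Schreier verification in the first assertion: it is exactly here that the hypothesis that the kernel is a torsion object (a group, with $P_{K}=K$) is indispensable, since this is what guarantees that the group-theoretic difference $p_{2}-p_{1}$ re-enters the positive cone $P_{G}$. I note in passing that the second assertion also admits a direct proof, namely $P_{G} = f^{-1}(P_{H})$: any $x \in G$ with $f(x) \in P_{H}$ can be written $x = (x-p)+p$ with $p \in P_{G}$ chosen so that $\bar{f}(p)=f(x)$ and $x-p \in K \subseteq P_{G}$. Routing through Lemma \ref{lemma M} is, however, more in keeping with the preceding development.
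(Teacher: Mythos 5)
Your proof is correct and follows essentially the same route as the paper's: verify that the kernel pair of $\bar{f}$ is a Schreier point by exhibiting the unique element $(0, p_2-p_1)$ of $\mathsf{Ker}(r_1)$ (using that the difference lands in $K \subseteq P_G$, a point the paper leaves implicit but which you rightly extract from the left-hand square being a pullback), and then apply Lemma \ref{lemma M} with the identity on kernels, noting that the bottom row is special Schreier because every short exact sequence of groups is. The extra direct argument for $P_G = f^{-1}(P_H)$ is a fine aside but not needed.
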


\begin{proof}
By Lemma \ref{lemma M}, since any short exact sequence in the category $\mathsf{Grp}$ of groups is a special Schreier extension, it suffices to show that the sequence
\begin{tikzcd}
K \arrow[r,"\bar{k}"] 
& P_{G} \arrow[r,"\bar{f}"]
& P_{H}
\end{tikzcd}
is a special Schreier extension in $\mathsf{Mon}$. Consider the kernel pair $(Eq(\bar{f}),r_{1},r_{2})$ of $\bar{f}$ where the projections $r_{1}$ and $r_{2}$ are split by the diagonal morphism $s \colon P_G \rightarrow Eq(\overline{f})$. Note that there is no restriction in assuming that $f \colon G \rightarrow H \cong G/K$ is the canonical quotient of $G$ by its normal subgroup $K$, and we write $f(g)= \bar{g}^K$.
For any $(a,b) \in Eq(\bar{f})$, one has the equalities $f(a) = \bar{f}(a) = \bar{f}(b) = f(b),$ since $\bar{f}$ is the restriction of $f$ to the positive cone $P_{G}$ of $G$. It follows that $\bar{a}^{K} = \bar{b}^{K}$, and there exists $x \in K$ such that $b = x + a$. As a consequence $(0,x) \in \mathsf{Ker}(r_1)$ satisfies the equalities
\[(0,x) + (s \cdot r_{1})(a,b) = (0,x) + (a,a) = (a,x+a) = (a,b),\]
and $(0,x)$ is the only element of $\mathsf{Ker}(r_1)$ with this property. This shows that $(r_{1},s)$ is a Schreier point, and the arrow $\bar{f} : P_{G} \rightarrow P_{H}$ is a special Schreier morphism. Since any surjective special Schreier morphism is the cokernel of its kernel and since $\bar{f}$ is surjective, this shows that $K \rightarrow P_{G} \rightarrow P_{H}$ is a special Schreier extension.
\end{proof}

In particular the previous result implies the following
\begin{corollary} \label{lemma 2 M}
Consider the short exact sequence \eqref{diagram sequence} in $\mathsf{PreOrdGrp}$.
Then the square 
$$\begin{tikzcd}
 P_{G} \arrow[r,two heads,"\bar{\eta}_{G}"] \arrow[d,tail,"g"']
& \eta_{G}(P_{G}) \arrow[d,tail,"\psi_{G}"]\\
G \arrow[r,two heads,"\eta_{G}"']
& G/N_{G}
\end{tikzcd}
$$
is a pullback in the category $\mathsf{Mon}$ of monoids.
\end{corollary}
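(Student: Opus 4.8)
The plan is to recognise that the short exact sequence \eqref{diagram sequence} is a particular instance of the short exact sequence \eqref{diag SES pb} treated in the preceding corollary, and then simply to read off the conclusion.

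First I would match the data of the two diagrams. In \eqref{diagram sequence} the kernel object is the group $N_{G}$, the left-hand vertical arrow is the identity on $N_{G}$ (the ``equal'' arrow), the middle object is $(G,P_{G})$, and the quotient object is $(G/N_{G},\eta_{G}(P_{G}))$. The bottom row $N_{G} \rightarrowtail G \twoheadrightarrow G/N_{G}$ is a short exact sequence in $\mathsf{Grp}$, and \eqref{diagram sequence} was shown, in the proof of Proposition \ref{torsion OrdGrp}, to be exact in $\mathsf{PreOrdGrp}$. Hence \eqref{diagram sequence} has exactly the shape \eqref{diag SES pb}, with $K = N_{G}$, $\bar{f} = \bar{\eta}_{G}$, $f = \eta_{G}$, the codomain positive cone $\eta_{G}(P_{G})$ playing the role of $P_{H}$, and $\psi_{G}$ playing the role of the right-hand inclusion.

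Then I would invoke the preceding corollary, which asserts that for any short exact sequence of the form \eqref{diag SES pb} the right-hand square is a pullback in $\mathsf{Mon}$. Specialising this to \eqref{diagram sequence}, the right-hand square is precisely
$$
\begin{tikzcd}
P_{G} \arrow[r,two heads,"\bar{\eta}_{G}"] \arrow[d,tail,"g"']
& \eta_{G}(P_{G}) \arrow[d,tail,"\psi_{G}"]\\
G \arrow[r,two heads,"\eta_{G}"']
& G/N_{G},
\end{tikzcd}
$$
which is therefore a pullback in $\mathsf{Mon}$, as claimed.

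There is no genuine obstacle here: the only point requiring attention is the verbatim identification of \eqref{diagram sequence} as an instance of \eqref{diag SES pb}. Since the exactness of \eqref{diagram sequence} in $\mathsf{PreOrdGrp}$ is already available from the proof of Proposition \ref{torsion OrdGrp}, every hypothesis of the preceding corollary is met, and the statement follows immediately.
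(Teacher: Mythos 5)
Your proof is correct and is exactly the paper's argument: the paper derives this corollary by observing that \eqref{diagram sequence} is an instance of the short exact sequence \eqref{diag SES pb} treated in the immediately preceding corollary (with $K = N_{G}$, $f=\eta_G$, $\bar f = \bar\eta_G$), whose conclusion is precisely that the right-hand square is a pullback in $\mathsf{Mon}$. Nothing is missing.
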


As a consequence of Corollary \ref{lemma 2 M} from now on we will write $P_{G}/N_{G}$ instead of $\eta_{G}(P_{G})$ for the codomain of $\bar{\eta}_G$ in the short exact sequence \eqref{diagram sequence}. This means that in this sequence in $\mathsf{PreOrdGrp}$ we have short exact sequences both at the group and at the monoid level.

As reminded in the previous section, if the reflector $F \colon \mathsf{PreOrdGrp} \rightarrow \mathsf{ParOrdGrp}$ has stable units, then the adjunction \eqref{main-adj} gives rise to a factorization system and is admissible in the sense of the categorical Galois theory \cite{Janelidze}. The following proposition states that this is in fact the case for our adjunction \eqref{main-adj}.

\begin{proposition}\label{Stable-units}
The reflector $F  \colon \mathsf{PreOrdGrp} \rightarrow \mathsf{ParOrdGrp}$ in the adjunction \eqref{main-adj} has stable units.
\end{proposition}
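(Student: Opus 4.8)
The plan is to exploit the explicit description of the reflector $F$ coming from the torsion theory $(\mathsf{Grp},\mathsf{ParOrdGrp})$ of Proposition \ref{torsion OrdGrp}, namely $F(G,P_{G})=(G/N_{G},P_{G}/N_{G})$ with unit $\eta_{G}$ the normal epimorphism of \eqref{diagram sequence}. First I would reduce the statement to a single isomorphism. Fix an object $(B,P_{B})$, an arbitrary morphism $f\colon (C,P_{C})\to UF(B)$, and form the pullback
\[
\begin{tikzcd}
P \arrow[r,"\pi_{2}"] \arrow[d,"\pi_{1}"']
& (C,P_{C}) \arrow[d,"f"]\\
(B,P_{B}) \arrow[r,"\eta_{B}"']
& UF(B).
\end{tikzcd}
\]
Applying $F$ and using that $F(\eta_{B})$ is an isomorphism (as is always the case for a reflection onto a full replete subcategory), the image square is a pullback if and only if $F(\pi_{2})$ is an isomorphism. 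Thus it suffices to prove that $\pi_{2}\in\E$, i.e. that the projection $\pi_{2}$ is inverted by $F$.

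Next I would compute $\pi_{2}$ concretely. Since limits in $\mathsf{PreOrdGrp}$ are taken componentwise, $P$ is given by the group pullback $C\times_{B/N_{B}}B$ together with the monoid pullback $P_{C}\times_{P_{B}/N_{B}}P_{B}$ as positive cone. By Corollary \ref{lemma 2 M} the unit $\eta_{B}$ is a normal epimorphism both at the group and at the monoid level; lifting surjectivity along the two projections then shows that $\pi_{2}$ is itself a normal epimorphism. The crucial step is the identification of $\mathsf{Ker}(\pi_{2})$: computing kernels componentwise yields the subgroup $\{(0,b)\mid b\in N_{B}\}$, whose intersection with the positive cone of $P$ is the whole of it, so that $\mathsf{Ker}(\pi_{2})\cong(N_{B},N_{B})$ belongs to the torsion subcategory $\mathsf{Grp}$.

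It then remains to conclude that a normal epimorphism with torsion kernel is inverted by $F$, which I would argue directly. The map $F(\pi_{2})$ is surjective at both levels because $\pi_{2}$ is a normal epimorphism; for injectivity at the group level one has to check that an element $(c,b)\in P$ with $c\in N_{C}$ necessarily lies in the torsion part $N_{P}$ of $P$. Here the orthogonality of torsion and torsion-free objects (axiom $(a)$ of Definition \ref{def torsion theory}), applied to the restriction of $f$ to $(N_{C},N_{C})\to UF(B)$, forces $f(N_{C})=0$; equivalently this uses the reducedness of the positive cone $P_{B}/N_{B}$. Consequently $\eta_{B}(b)=f(c)=0$, so $b\in N_{B}$ and $(c,b)\in N_{P}$, giving injectivity. (Alternatively, one may simply invoke the Remark following the definition of stable units, which records that the torsion-free reflector of any torsion theory in a normal category has stable units.)

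The step I expect to require the most care is the passage to the positive cones: showing that $F(\pi_{2})$ is an isomorphism of groups is routine, but one must verify that it is an isomorphism in $\mathsf{PreOrdGrp}$, i.e. that it restricts to a bijection $P_{P}/N_{P}\to P_{C}/N_{C}$ between the positive cones. This is exactly where the surjectivity of $\bar{\eta}_{B}$ from Corollary \ref{lemma 2 M} and the identity $f(N_{C})=0$ are combined, and it is the only point at which the monoid-level structure, rather than the underlying groups, genuinely intervenes.
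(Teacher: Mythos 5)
Your proof is correct and follows essentially the same route as the paper: both reduce the statement to showing that the pullback projection onto the domain of $f$ is inverted by $F$, and both do so by computing that this projection is a normal epimorphism whose kernel is the torsion object $(N_{B},N_{B})$. The only difference is cosmetic: where you verify bijectivity of $F(\pi_{2})$ element-wise at the group and monoid levels, the paper concludes more abstractly that $F(\pi_2)$, being the image under the left adjoint $F$ of the cokernel of its torsion kernel, is the cokernel of a zero arrow out of $0$ and hence an isomorphism.
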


\begin{proof}
We have to prove that the functor $F$ preserves pullbacks of the form of the right-hand cube in the following commutative diagram
\begin{center}
\begin{equation} \label{diagram pb}
\begin{tikzcd}
& & P_{G} \times_{P_{G}/N_{G}} P_{H} \arrow[rr,"\bar{p}_{2}"] \arrow[dr,tail,"\phi"] \arrow[dd,"\bar{p}_{1}"']
& & P_{H} \arrow[dd,near start,"\bar{f}"] \arrow[dr,tail,"h"]
& \\
& & & G \times_{G/N_{G}} H \arrow[dd,near end,"p_{1}"] \arrow[rr,near end,"p_{2}"']
& & H \arrow[dd,"f"]\\
N_{G} \arrow[rr,tail,dotted,"\bar{k}_{G}"] \arrow[uurr,tail,dotted,near start,"\bar{i}"] \arrow[dr,equal]
& & P_{G} \arrow[rr,two heads,"\bar{\eta}_{G}"] \arrow[dr,tail,"g"']
& & P_{G}/N_{G} \arrow[dr,tail,"\psi_{G}"]
& \\
 & N_{G} \arrow[uurr,tail,dotted,near start,"i"] \arrow[rr,tail,dotted,"k_{G}"']
& & G \arrow[rr,two heads,"\eta_{G}"']
& & G/N_{G}
\end{tikzcd}
\end{equation}
\end{center}
in which $(k_{G},\bar{k}_{G})$ is the kernel of $(\eta_{G},\bar{\eta}_{G})$, and the induced arrow $(i,\bar{i})$ is the kernel of the arrow $(p_{2},\bar{p}_{2})$.
If we apply the functor $F$ to the diagram \eqref{diagram pb} we get the commutative diagram
\begin{center}
\begin{equation} \label{diagram pb2}
\begin{tikzcd} [column sep =scriptsize,row sep =scriptsize]
 & & \left( P_{G} \times_{P_{G}/N_{G}} P_{H} \right)/N \arrow[rr,"F(\bar{p}_{2})"] \arrow[dr,tail,"\psi"] \arrow[dd,"F(\bar{p}_{1})"']
& & P_{H}/N_{H} \arrow[dr,tail,"\psi_{H}"] \arrow[dd,near start,"F(\bar{f})"]
& \\
& & & \left( G \times_{G/N_{G}} H \right)/N \arrow[rr,near end,"F(p_{2})"'] \arrow[dd,near end,"F(p_{1})"]
& & H/N_{H} \arrow[dd,"F(f)"]\\
0 \arrow[uurr,tail,dotted,near start,"F(\bar{i})"] \arrow[rr,tail,dotted] \arrow[dr,equal]
& & P_{G}/N_{G} \arrow[rr,dash,shift left=0.03cm] \arrow[rr,dash,shift right=0.03cm] \arrow[dr,tail,"\psi_{G}"']
& & P_{G}/N_{G} \arrow[dr,tail,"\psi_{G}"]
& \\
 & 0 \arrow[uurr,tail,dotted,near start,"F(i)"] \arrow[rr,tail,dotted]
& & G/N_{G} \arrow[rr,equal]
& & G/N_{G}
\end{tikzcd}
\end{equation}
\end{center}
where we write $N$ for $N_{G \times_{G/N_{G}} H}$ and $(F(a),F(\bar{a}))$ for the image by $F$ of any arrow $(a,\bar{a})$ in $\mathsf{PreOrdGrp}$. We observe that the arrows $p_{2}$ and $\bar{p}_{2}$ are normal epimorphisms since $\eta_{G}$ and $\bar{\eta}_{G}$ are normal epimorphisms and the front and back squares of the cube in \eqref{diagram pb} are pullbacks in $\mathsf{Grp}$ and $\mathsf{Mon}$, respectively. This means that $(p_{2},\bar{p}_{2})$ is the cokernel of its kernel: $(p_{2},\bar{p}_{2}) = \mathsf{coker}(i,\bar{i})$. It follows that $(F(p_{2}),F(\bar{p}_{2}))$ is the cokernel of $(F(i),F(\bar{i}))$, which is the zero arrow, and the arrow $(F(p_{2}),F(\bar{p}_{2}))$ is then an isomorphism. Accordingly, the front and the back squares of the cube in the diagram \eqref{diagram pb2} are pullbacks in $\mathsf{Grp}$ and in $\mathsf{Mon}$, respectively, i.e. the cube of this diagram is a pullback in $\mathsf{ParOrdGrp}$, as desired.
\end{proof}

\begin{remark}
\emph{By taking into account the fact that the pair ($\mathsf{Grp},\mathsf{ParOrdGrp}$) is a torsion theory in the normal category $\mathsf{PreOrdGrp}$ (thanks to Proposition \ref{torsion OrdGrp}) the above result can be deduced from Theorem 1.6 in \cite{Everaert-Gran15}. We have included a direct proof here in order to make the article more self-contained, and also to give an explicit description of the behavior of the reflector $F  \colon \mathsf{PreOrdGrp} \rightarrow \mathsf{ParOrdGrp}$.}
\end{remark}

Let us now characterize the two classes $\E$ and $\M$ of the factorization system induced by the reflector $F  \colon \mathsf{PreOrdGrp} \rightarrow \mathsf{ParOrdGrp}$ in the category $\mathsf{PreOrdGrp}$ of preordered groups:

\begin{proposition}
Given the adjunction \eqref{main-adj}, we have a factorization system $(\E,\M)$ in $\mathsf{PreOrdGrp}$ where:
\begin{itemize}
\item[$\bullet$] $(f,\tilde{f}) : (G,P_{G}) \rightarrow (H,P_{H})$ is in the class $\E$ if and only if the following conditions hold:
\begin{enumerate}
\item[(a)] $f^{-1}(N_{H}) = N_{G}$,
\item[(b)] for any $y \in H$ there exists $x \in G$ such that $\overline{f(x)}^{N_{H}} = \overline{y}^{N_{H}}$,
\item[(c)]  for any $y \in P_{H}$ there exists $x \in P_{G} $ such that $\overline{\tilde{f}(x)}^{N_{H}} = \overline{y}^{N_{H}}$. 
\end{enumerate}
\item[$\bullet$] $(f,\tilde{f}) : (G,P_{G}) \rightarrow (H,P_{H})$ is in the class $\M$ if and only if the morphism $\phi : N_{G} \rightarrow N_{H}$ (which is the restriction of $f \colon G \rightarrow H$ to $N_{G}$) is an isomorphism.
\end{itemize}
\end{proposition}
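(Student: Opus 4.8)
The plan is to argue straight from the definitions of the two classes attached to the adjunction \eqref{main-adj}: a morphism $(f,\tilde f)$ lies in $\E$ exactly when $F(f,\tilde f)$ is an isomorphism, and it lies in $\M$ exactly when the naturality square \eqref{naturality} for $(f,\tilde f)$ is a pullback. I will use the explicit description of the reflector obtained above, namely $F(G,P_G)=(G/N_G,P_G/N_G)$ with unit $(\eta_G,\bar\eta_G)$ as in \eqref{diagram sequence}, and the fact that $F(f,\tilde f)$ is the pair induced on quotients, $\overline{g}^{N_G}\mapsto\overline{f(g)}^{N_H}$ at the group level and its restriction at the cone level. Throughout I will use that limits in $\mathsf{PreOrdGrp}$ are computed componentwise, at the group level in $\mathsf{Grp}$ and at the positive-cone level in $\mathsf{Mon}$, and that a morphism is an isomorphism in $\mathsf{PreOrdGrp}$ if and only if its group component is a group isomorphism carrying the positive cone of the domain onto that of the codomain.

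For the class $\E$, I would first record that $f(N_G)\subseteq N_H$, so $F(f)$ is well defined, and then translate the requirement that $F(f,\tilde f)$ be an isomorphism into three separate conditions on $F(f)\colon G/N_G\to H/N_H$. Injectivity of $F(f)$ means that $\ker F(f)=f^{-1}(N_H)/N_G$ is trivial, i.e. $f^{-1}(N_H)=N_G$, which is exactly (a); surjectivity of $F(f)$ as a group morphism is (b). Since $F(f)$ is then injective, its restriction to positive cones is automatically injective and satisfies $F(f)(P_G/N_G)\subseteq P_H/N_H$, so the only remaining requirement for an isomorphism is the reverse inclusion $F(f)(P_G/N_G)= P_H/N_H$, which is precisely (c). Hence $F(f,\tilde f)$ is an isomorphism iff (a), (b) and (c) all hold.

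For the class $\M$, I form the morphism of short exact sequences induced by $(f,\tilde f)$ between the torsion sequences of $(G,P_G)$ and $(H,P_H)$: its left-hand vertical is exactly $\phi\colon N_G\to N_H$, its middle is $(f,\tilde f)$, and its right-hand vertical is $F(f,\tilde f)$, so that the right-hand square is the naturality square \eqref{naturality}. Because pullbacks are componentwise, I treat the group and the cone components separately. At the group level $\mathsf{Grp}$ is protomodular, so the standard fact that in a morphism of short exact sequences the right-hand square is a pullback iff the induced map on kernels is an isomorphism yields: the group component is a pullback iff $\phi$ is an isomorphism (one direction is the Short Five Lemma, the other is the observation that $N_G=\ker\eta_G$ is the pullback of $N_H=\ker\eta_H$). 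At the positive-cone level I cannot invoke protomodularity, since $\mathsf{Mon}$ is not protomodular; the remedy is that the cone-level rows $N_G\to P_G\to P_G/N_G$ and $N_H\to P_H\to P_H/N_H$ are special Schreier extensions (established in the corollaries following Lemma \ref{lemma M}), so that Lemma \ref{lemma M} applies: if $\phi$ is an isomorphism, the right-hand cone square is a pullback in $\mathsf{Mon}$. Combining the two levels, $\phi$ an isomorphism forces the whole naturality square to be a pullback; conversely, a pullback in $\mathsf{PreOrdGrp}$ restricts to a pullback of group components, which forces $\phi$ to be an isomorphism.

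The main obstacle is precisely the positive-cone component of the $\M$ characterization: the failure of protomodularity in $\mathsf{Mon}$ means the naive kernel-comparison argument does not apply directly, and one must instead route the argument through the special Schreier machinery of \cite{BMFMS} (pullback-stability and the Short Five Lemma for special Schreier extensions, Proposition \ref{proposition S extensions}), together with the verification, already carried out above, that all the relevant cone-level extensions are special Schreier. Once this is in place the two components assemble componentwise into the stated characterizations of $\E$ and $\M$.
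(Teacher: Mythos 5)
Your proof is correct and follows essentially the same route as the paper: both translate membership in $\E$ into the statement that $(\alpha,\tilde\alpha)=F(f,\tilde f)$ is an isomorphism and read off conditions (a), (b), (c) as injectivity/surjectivity at the group and cone levels, and both reduce membership in $\M$ to the group-level square being a pullback (Short Five Lemma in $\mathsf{Grp}$) plus the cone-level square being a pullback via the special Schreier machinery. The only cosmetic difference is that you apply Lemma \ref{lemma M} directly to the top and bottom faces of the cube, whereas the paper invokes Corollary \ref{lemma 2 M} (itself a consequence of Lemma \ref{lemma M}) on the front and back faces and then pastes pullbacks; these are interchangeable.
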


\begin{proof}
Consider the commutative diagram
\begin{center}
\begin{equation} \label{diagram E and M}
\begin{tikzcd}
 & & & P_{H} \arrow[rr,"\bar{\eta}_{H}"] \arrow[dd]
& & P_{H}/N_{H} \arrow[dd]\\
 & & P_{G} \arrow[ur,"\tilde{f}"'] \arrow[rr,near end,"\bar{\eta}_{G}"] \arrow[dd]
& & P_{G}/N_{G} \arrow[ur,"\tilde{\alpha}"'] \arrow[dd]
& \\
 & N_{H} \arrow[uurr,bend left,"\ker(\bar{\eta}_{H})"] \arrow[rr,near start,"\ker(\eta_{H})"]
& & H \arrow[rr,near end,"\eta_{H}"] 
& & H/N_{H} \\
N_{G} \arrow[uurr,bend left,"\ker(\bar{\eta}_{G})"] \arrow[ur,"\phi"] \arrow[rr,"\ker(\eta_{G})"']
& & G \arrow[ur,"f"] \arrow[rr,"\eta_{G}"']
& & G/N_{G} \arrow[ur,"\alpha"']
& 
\end{tikzcd}
\end{equation}
\end{center}
where $(\alpha,\tilde{\alpha})$ stands for $F(f,\tilde{f})$, and where the front and the back squares of the cube are the $(G,P_{G})$-component and the $(H,P_{H})$-component of the unit of the adjunction \eqref{main-adj}, respectively.
\begin{itemize}
\item[$\bullet$] Assume that $(f,\tilde{f}) : (G,P_{G}) \rightarrow (H,P_{H})$ is in the class $\E$, so that $(\alpha,\tilde{\alpha})$ is an isomorphism in $\mathsf{PreOrdGrp}$. The fact that $\alpha$ is a monomorphism implies that the square
\begin{center}
\begin{equation} \label{pb square}
\begin{tikzcd}
N_{G} \arrow[r,"\ker(\eta_{G})"] \arrow[d,"\phi"']
& G \arrow[d,"f"]\\
N_{H} \arrow[r,"\ker(\eta_{H})"']
& H
\end{tikzcd}
\end{equation}
\end{center}
is a pullback, i.e. $f^{-1}(N_{H}) = N_{G}$ (by Lemma \ref{property-normal}(2)). Furthermore, knowing that $\alpha$ is surjective, for any $y$ in $H$ there exists an $x$ in $G$ such that $\alpha(\bar{x}^{N_{G}}) = \bar{y}^{N_{H}}$, that is $\overline{f(x)}^{N_{H}} = \overline{y}^{N_{H}}$. We can show the analogue assertion for $\tilde{f}$ in a similar way, since $\tilde{\alpha}$ is surjective.\\
Conversely, if $f^{-1}(N_{H}) = N_{G}$, the square \eqref{pb square} is a pullback, and this implies that $\alpha$ is a monomorphism (by Lemma \ref{property-normal}(2) in the category $\mathsf{Grp}$). Now, the assumption $(b)$ guarantees that $\eta_H \cdot f = \alpha \cdot \eta_G$ is surjective, hence $\alpha$ is surjective. Similarly, $\tilde{\alpha}$ is surjective because the assumption $(c)$ says that $\bar{\eta}_H \cdot \tilde{f} = \tilde{\alpha} \cdot \bar{\eta}_{G}$ is surjective.
It follows that $(\alpha,\tilde{\alpha})$ is an isomorphism in $\mathsf{PreOrdGrp}$, i.e. that $(f,\tilde{f})$ belongs to the class $\E$.
\item[$\bullet$] To prove the second point we observe that the arrow $(f,\tilde{f})$ belongs to the class $\M$ if and only if the cube in the diagram \eqref{diagram E and M} is a pullback in $\mathsf{PreOrdGrp}$, and this is equivalent to the bottom and the top squares of this cube being pullbacks in the categories $\mathsf{Grp}$ and $\mathsf{Mon}$, respectively.\\
If these two squares are pullbacks then the induced arrow $\phi : N_{G} \rightarrow N_{H}$, the restriction of the morphism $f : G \rightarrow H$ to $N_{G}$, is obviously an isomorphism.\\
Conversely, if $\phi$ is an isomorphism, then the bottom square of the cube is a pullback (since the Short Five Lemma holds in the category $\mathsf{Grp}$ of groups). The fact that the top square of the same cube is a pullback (in the category $\mathsf{Mon}$ of monoids) is a consequence of Corollary \ref{lemma 2 M}. Indeed, this latter states that the front and the back squares in the cube of diagram \eqref{diagram E and M} are pullbacks, hence the top square is a pullback since the bottom one is a pullback.
 \qedhere 
\end{itemize} 
\end{proof}

Now that we have a description of the \emph{trivial coverings}, i.e. the morphisms in the class $\M$, we would like to have a description of the class ${\M}^*$ of \emph{coverings}.
We shall actually prove that there is a monotone-light factorization system $(\E',\M^{*})$, by applying Theorem \ref{thmEG}. In the following two propositions
 we verify the two fundamental assumptions needed to apply that theorem:
 
\begin{proposition} \label{ParOrdGrp covers PreOrdGrp}
For any object $(G,P_{G})$ in the category $\mathsf{PreOrdGrp}$ of preordered groups, there exist an object $(H,P_{H})$ in the subcategory $\mathsf{ParOrdGrp}$ of partially ordered groups and an effective descent morphism $$(f,\bar{f}) : (H,P_{H}) \rightarrow (G,P_{G})$$ from $(H,P_{H})$ to $(G,P_{G})$.
\end{proposition}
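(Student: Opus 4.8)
The plan is to exploit the fact, recalled just before the statement, that in $\mathsf{PreOrdGrp}$ a morphism is an effective descent morphism precisely when it is a normal epimorphism. It therefore suffices to exhibit, for a given $(G,P_G)$, a partially ordered group $(H,P_H)$ together with a normal epimorphism $(f,\bar f)\colon (H,P_H)\to (G,P_G)$; recall that such a pair is a normal epimorphism exactly when $f\colon H\to G$ is surjective and $f(P_H)=P_G$.

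Concretely, I would take $H=G\times\Z$ (the direct product of groups), let $f\colon H\to G$ be the first projection $(g,n)\mapsto g$, and define the positive cone
\[
P_H=\{(g,n)\in G\times\Z : g\in P_G,\ n>0\}\cup\{(0,0)\}.
\]
First I would check that $P_H$ is a submonoid of $H$ closed under conjugation: a sum of two elements of $P_H$ again has a strictly positive (or zero) second coordinate and a first coordinate lying in the submonoid $P_G$, while conjugation by $(x,m)$ sends $(g,n)$ to $(x+g-x,n)$, which stays in $P_H$ because $P_G$ is closed under conjugation in $G$ and the $\Z$-coordinate is left unchanged. Hence $(H,P_H)$ is an object of $\mathsf{PreOrdGrp}$, and $(f,\bar f)$, with $\bar f$ the restriction of $f$ to $P_H$, is a morphism.

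Next I would verify the two properties that make the construction work. On the one hand $P_H$ is \emph{reduced}: if $(g,n)+(g',n')=(0,0)$ with both summands in $P_H$, then $n+n'=0$ forces $n=n'=0$, whence both summands equal $(0,0)$; thus $(H,P_H)\in\mathsf{ParOrdGrp}$. On the other hand $f$ is surjective and $f(P_H)=P_G$, since every $g\in P_G$ is the image of $(g,1)\in P_H$ and $0=f(0,0)$. Therefore $(f,\bar f)$ is a normal epimorphism, and hence an effective descent morphism by the result of \cite{Clementino-Martins-Ferreira-Montoli} recalled above.

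The main obstacle is exactly the tension between these last two requirements: $P_H$ must be reduced, yet $\bar f$ must hit the whole of $P_G$, which in general is far from reduced (its symmetric part $N_G$ may be large). This is why neither the choice $H=G$ nor the naive product cone $P_G\times\N$ can succeed, the latter failing to be reduced precisely along $N_G\times\{0\}$. Adjoining the auxiliary totally ordered coordinate $\Z$ and forcing strict positivity there is what simultaneously destroys the symmetric part and retains surjectivity onto $P_G$; identifying this device is the only genuinely non-routine step, the remaining verifications being immediate.
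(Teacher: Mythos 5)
Your construction is exactly the paper's: the paper takes $H=\Z\times G$ with $P_H=(\N\times P_G)\setminus\{(0,g)\mid g\neq 0\}$, which is precisely your cone $\{(g,n): g\in P_G,\ n>0\}\cup\{(0,0)\}$ up to swapping the factors, and the same projection onto $G$. The verifications (reducedness, closure under conjugation, surjectivity of $f$ and $\bar f$, and the identification of effective descent morphisms with normal epimorphisms) match the paper's proof, so the proposal is correct and essentially identical.
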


\begin{proof}
Let $(G,P_{G}) \in \mathsf{PreOrdGrp}$. Define $(H,P_{H})$ in the following way:
\begin{itemize}
\item[$\bullet$] $H = \Z \times G$;
\item[$\bullet$] $P_{H} = \left( \N \times P_{G} \right) \backslash \{(0,g) \ \arrowvert \ g \neq 0\}$.
\end{itemize}
It is easy to check that $P_{H}$ is a submonoid of the group $H$ (endowed with the natural group structure). To show that $P_{H}$ is closed in $H$ under conjugation consider $(z,g) \in H$ and $(n,h) \in P_{H}$. Then \[(z,g) + (n,h) - (z,g) = (z + n - z, g + h - g) \in \N \times P_{G}\]
since $P_{G}$ is closed under conjugation in $G$, and if $z + n - z = 0$, i.e. if $n = 0$, then $(n,h) \in P_{H}$ implies $h = 0$, that is $g + h - g = g-g = 0$. This means that $(z,g) + (n,h) - (z,g) \in P_{H}$, and $(H,P_{H})$ is a preordered group, which actually lies in $\mathsf{ParOrdGrp}$, since by definition the submonoid $P_{H}$ is reduced (the only element having an inverse in $P_{H}$ is $(0,0)$).

Let us next consider the function $f : H \rightarrow G$ defined, for any $(z,g) \in H$, by $f(z,g) = g$.
It is a morphism in $\mathsf{PreOrdGrp}$, since it is a group morphism and $f(z,g) = g \in P_{G}$, for any $(z,g) \in P_{H}$. In other words, the restriction $\bar{f}$ of $f$ to $P_{H}$ takes its values in $P_{G}$. The morphism $(f,\bar{f}) : (H,P_{H}) \rightarrow (G,P_{G})$ is also a normal epimorphism in $\mathsf{PreOrdGrp}$, since both $f$ and $\bar{f}$ are easily seen to be surjective. Since effective descent morphisms coincide with normal epimorphisms in $\mathsf{PreOrdGrp}$ \cite{Clementino-Martins-Ferreira-Montoli}, the proof is complete.
\end{proof}

\begin{proposition}
For any normal monomorphism $(i,\bar{i}) : (K,P_{K}) \rightarrow (G,P_{G})$ in $\mathsf{PreOrdGrp}$, the monomorphism $(i,\bar{i})\cdot (k,\bar{k}) : (N_{K},N_{K}) \rightarrow (G,P_{G})$ is normal, where $(k,\bar{k}) : (N_{K},N_{K}) \rightarrow (K,P_{K})$ is the $(K,P_{K})$-component of the counit of the coreflection $T : \mathsf{PreOrdGrp} \rightarrow \mathsf{Grp}$.
\end{proposition}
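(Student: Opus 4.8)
The plan is to reduce everything to the elementary description of normal monomorphisms in $\mathsf{PreOrdGrp}$ recalled earlier: a morphism $(j,\bar{j}) \colon (N,P_{N}) \to (G,P_{G})$ is a normal monomorphism precisely when $j$ is a normal monomorphism in $\mathsf{Grp}$ and $P_{N} = j^{-1}(P_{G})$ (equivalently, its defining square is a pullback in $\mathsf{Mon}$). Writing $j = i \cdot k \colon (N_{K},N_{K}) \to (G,P_{G})$ for the composite under consideration, whose domain positive cone is the whole group $N_{K}$, I therefore only need to verify these two conditions for $j$.

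The key preliminary step is to identify the subgroup $N_{K}$ inside $G$. Since $(i,\bar{i})$ is a normal monomorphism, the pullback condition gives $P_{K} = i^{-1}(P_{G})$; hence, for $n \in K$, the defining condition $n \in N_{K}$ (namely $n \in P_{K}$ and $-n \in P_{K}$) is equivalent to $i(n) \in P_{G}$ and $-i(n) \in P_{G}$, i.e.\ to $i(n) \in N_{G}$. This shows that, identifying $K$ with the normal subgroup $i(K)$ of $G$, we have the crucial equality $N_{K} = K \cap N_{G}$. I expect this identification to be the main (indeed the only substantive) point of the argument; once it is in place the rest is routine.

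Granting this, the normality of $j$ in $\mathsf{Grp}$ follows immediately: $K$ is a normal subgroup of $G$ because $i$ is a normal monomorphism in $\mathsf{Grp}$, and $N_{G}$ is a normal subgroup of $G$ (as established in the proof of Proposition~\ref{torsion OrdGrp}), so their intersection $N_{K} = K \cap N_{G}$ is again normal in $G$. Thus $j$ is a normal monomorphism in $\mathsf{Grp}$.

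It remains to check the pullback condition $N_{K} = j^{-1}(P_{G})$. The inclusion $j^{-1}(P_{G}) \subseteq N_{K}$ is automatic, since $j^{-1}(P_{G})$ is by definition a subset of the domain $N_{K}$. For the reverse inclusion, every $n \in N_{K}$ satisfies $n \in P_{K}$, and $\bar{i}(P_{K}) \subseteq P_{G}$, so $j(n) = i(k(n)) = i(n) \in P_{G}$; hence $N_{K} \subseteq j^{-1}(P_{G})$. The two conditions together show that $(i,\bar{i}) \cdot (k,\bar{k})$ is a normal monomorphism, which is exactly the hypothesis required to invoke Theorem~\ref{thmEG}.
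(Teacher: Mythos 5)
Your proof is correct and follows essentially the same route as the paper's: both arguments hinge on the identification $N_{K}=K\cap N_{G}$ (whence normality of $N_K$ in $G$ as an intersection of normal subgroups) and then verify the pullback condition on positive cones, which you do elementwise where the paper composes two pullback squares in $\mathsf{Mon}$ and invokes Proposition~\ref{kernel-cokernel}. The difference is purely presentational.
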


\begin{proof}
Since $(i,\bar{i})$ is a normal monomorphism, there exists an arrow $(f,\bar{f}) : (G,P_{G}) \rightarrow (H,P_{H})$ in $\mathsf{PreOrdGrp}$ such that $(i,\bar{i}) = \ker(f,\bar{f})$. With that notation we then have that $K = \mathsf{Ker}(f)$ and that $P_{K} = K \cap P_{G}$ since kernels in $\mathsf{PreOrdGrp}$ are computed componentwise at the level of groups and monoids, respectively.

Let us first show that $N_{K}$ is normal in $G$, where
\begin{eqnarray} N_{K} & = &  \{x \in K \arrowvert x \in P_{K} \ \text{and} \ -x \in P_{K}\} \nonumber \\ &=& \{x \in K \,  \arrowvert \, x \in K \cap P_{G} \ \text{and} \ -x \in K \cap P_{G}\} \nonumber \\ &=& K \cap N_G.\nonumber
\end{eqnarray}
Since $K$ and $N_G$ are two normal subgroups of $G$, $N_K$ is normal in $G$.
In order to prove that the inclusion $(i,\bar{i})\cdot (k,\bar{k}) : (N_{K},N_{K}) \rightarrow (G,P_{G})$ is a normal monomorphism in $\mathsf{PreOrdGrp}$ one observes that the following rectangle is a pullback
$$
\begin{tikzcd}
N_{K} \arrow[r,tail,"\bar{k}"] \arrow[d,equal]
& P_{K} \arrow[r,tail,"\bar{i}"] \arrow[d,tail,""']
& P_{G} \arrow[d,tail,"\psi_{G}"]\\
N_{K} \arrow[r,tail,"k"']
& K \arrow[r,tail,"i"']
& G, 
\end{tikzcd} 
$$
since it is made of two pullbacks, and the result then follows from Proposition \ref{kernel-cokernel}.
\end{proof}

We are now ready to state the final result of this section.

\begin{theorem}\label{description-coverings}
Let us consider the following classes of morphisms in $\mathsf{PreOrdGrp}$:
\begin{itemize}
\item[$\bullet$] ${\E}'  = \{(f,\bar{f}) \in \mathsf{PreOrdGrp} \, \arrowvert \, (f,\bar{f}) \ \text{is a normal epimorphism such that } \mathsf{Ker}(f,\bar{f}) \in \mathsf{Grp}\}$;
\item[$\bullet$] ${\M}^* = \{(f,\bar{f}) \in \mathsf{PreOrdGrp} \, \arrowvert \, \mathsf{Ker}(f,\bar{f}) \in \mathsf{ParOrdGrp}\}$.
\end{itemize}
Then $({\E}' ,{\M}^*)$ is a monotone-light factorization system.
\end{theorem}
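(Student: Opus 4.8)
The plan is to recognize this as a direct application of Theorem \ref{thmEG} to the torsion theory $(\T,\F) = (\mathsf{Grp},\mathsf{ParOrdGrp})$, so that the whole proof reduces to checking, one by one, that the hypotheses of that theorem are satisfied by the concrete data assembled in the preceding results. I would organize the verification so that each hypothesis is matched against exactly one earlier statement.

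First I would record the ambient setting: $\mathsf{PreOrdGrp}$ is a normal category (recalled in the Preliminaries, following \cite{Clementino-Martins-Ferreira-Montoli}), and the pair $(\mathsf{Grp},\mathsf{ParOrdGrp})$ is a torsion theory in it by Proposition \ref{torsion OrdGrp}. It is important to fix the roles of the two slots: $\T = \mathsf{Grp}$ is the torsion subcategory and $\F = \mathsf{ParOrdGrp}$ the torsion-free one, with the coreflection $T \colon \mathsf{PreOrdGrp} \to \mathsf{Grp}$ sending $(G,P_G)$ to $(N_G,N_G)$ and counit component $(k_G,\bar{k}_G)\colon (N_G,N_G)\to(G,P_G)$. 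That the reflector $F \colon \mathsf{PreOrdGrp} \to \mathsf{ParOrdGrp}$ has stable units — so that it induces a genuine factorization system $(\E,\M)$ — is exactly Proposition \ref{Stable-units}.

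Next I would match the two remaining, more substantial hypotheses. The normality condition of Theorem \ref{thmEG}, namely that $k\cdot \epsilon_K$ is a normal monomorphism for every normal monomorphism $k$, is precisely what the proposition immediately preceding this theorem establishes: for any normal monomorphism $(i,\bar{i})\colon (K,P_K)\to(G,P_G)$ the composite $(i,\bar{i})\cdot(k,\bar{k})\colon (N_K,N_K)\to(G,P_G)$ with the counit of the coreflection onto $\mathsf{Grp}$ is again normal. The effective-descent hypothesis, that every object of $\mathsf{PreOrdGrp}$ admits an effective descent morphism from an object of $\F = \mathsf{ParOrdGrp}$, is exactly Proposition \ref{ParOrdGrp covers PreOrdGrp}.

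With all the hypotheses in hand, Theorem \ref{thmEG} applies directly and yields at once that $(\E',\M^*)$ is a monotone-light factorization system, together with the identifications: $\E'$ is the class of normal epimorphisms whose kernel lies in the torsion subcategory $\T = \mathsf{Grp}$, and $\M^*$ is the class of morphisms whose kernel lies in the torsion-free subcategory $\F = \mathsf{ParOrdGrp}$. These coincide verbatim with the two classes in the statement, so the result follows. The only real care needed — and the closest thing to an obstacle — is bookkeeping: one must instantiate the abstract $(\T,\F)$ of Theorem \ref{thmEG} with $\mathsf{Grp}$ in the torsion slot (so that ``kernel in $\T$'' reads as $\mathsf{Ker}(f,\bar{f})\in\mathsf{Grp}$) and $\mathsf{ParOrdGrp}$ in the torsion-free slot, using the explicit counit $\epsilon_K$ recorded above. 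No computational difficulty remains, since every hypothesis has already been discharged in the preceding propositions.
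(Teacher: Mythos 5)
Your proposal is correct and follows exactly the paper's own route: the paper's proof is a one-line appeal to Theorem \ref{thmEG}, applied to the reflection \eqref{main-adj} via the two propositions immediately preceding the theorem, which is precisely the application you carry out (your version merely makes the hypothesis-checking explicit). No discrepancies to report.
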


\begin{proof}
This result follows from Theorem \ref{thmEG}, which can be applied to the 
reflection \ref{main-adj}  thanks to
the two previous propositions.
\end{proof}
The \emph{coverings} with respect to the adjunction \eqref{main-adj} are then the morphisms $f \colon A \rightarrow B$ in $\mathsf{PreOrdGrp}$ such that $\mathsf{Ker}(f) \in \mathsf{ParOrdGrp}$. This description is then similar to the one of the \emph{locally semisimple coverings} relative to a generalized semisimple class, given by Janelidze, M\'{a}rki and Tholen in \cite{JMT}. We explain the link with this latter approach in the next section.

\section{Classification of the coverings of preordered groups}

Let us first recall the approach to locally semisimple coverings based on Galois theory developed in \cite{JMT}. Here below we shall adapt the context in order to include the example of the category $\mathsf{PreOrdGrp}$ of preordered groups.

Let $\C$ be any normal category in which normal epimorphisms and effective descent morphisms coincide. Let us consider a fixed class $\mathcal{X}$ of objects in $\C$, called a \textit{generalized semisimple class}, having the property that the following two properties hold for any pullback
 \begin{center}
\begin{tikzcd}
E \times_{B} A \arrow[r,"\pi_{2}"] \arrow[d,"\pi_{1}"']
& A \arrow[d,"\alpha"]\\
E \arrow[r,"p"']
& B,
\end{tikzcd}
\end{center}
where $p$ is a normal epimorphism in $\C$:
\begin{enumerate}
\item $E \in \mathcal{X}$ and $A \in \mathcal{X}$ implies that $E \times_{B} A \in \mathcal{X}$;
\item $B \in \mathcal{X}$, $E \in \mathcal{X}$ and $E \times_{B} A \in \mathcal{X}$ implies that $A \in \mathcal{X}$.
\end{enumerate}
The notion of \textit{locally semisimple covering} is then defined relatively to a generalized semisimple class $\mathcal X$ in a category $\C$: a morphism $\alpha : A \rightarrow B$ is a locally semisimple covering in $\C$ if there is a normal epimorphism $p : E \rightarrow B$ such that the pullback $p^{*}(\alpha)$ of $\alpha$ along $p$ lies in the corresponding full subcategory $\mathcal X$ of $\C$.

For a fixed $B \in \C$, let $\mathsf{LocSSimple}_{\mathcal{X}}(B)$ be the full subcategory of the slice category $\C \downarrow B$ over $B$ whose objects are pairs $(A,\alpha)$, where $\alpha : A \rightarrow B$ is a locally semisimple covering. 

Under our assumptions, a normal epimorphism $p \colon E \rightarrow B$ in $\C$ induces a category equivalence $K_p \colon \C \downarrow B \rightarrow \mathsf{DiscFib}(Eq(p))$, since $p$ is an effective descent morphism. 
When, moreover, $p \colon E \rightarrow B$ is such that $E$ belongs to $\mathcal X$, the functor $K_p$ restricted to the category of locally semisimple coverings gives an equivalence of categories $$ \mathsf{LocSSimple}_{\mathcal{X}}(B) \cong \mathsf{DiscFib}_{\mathcal{X}}(Eq(p)),$$ where $\mathsf{DiscFib}_{\mathcal{X}}(Eq(p))$ is the full subcategory of $\mathsf{DiscFib}(Eq(p))$ whose objects are the discrete fibrations over $Eq(p)$ (as in \eqref{discrete}) with $F \in \mathcal{X}$:

\begin{theorem} \cite{JMT} \label{thm locally semisimple coverings}
Consider a normal category $\C$ where normal epimorphisms are effective descent morphisms, and $\mathcal{X}$ a generalized semisimple class  in $\C$. If $p : E \rightarrow B$ is a normal epimorphism in $\C$ such that $E \in \mathcal{X}$, there is an equivalence of categories
 \begin{equation}\label{equiv-restricted} 
\mathsf{LocSSimple}_{\mathcal{X}}(B) \cong \mathsf{DiscFib}_{\mathcal{X}}(Eq(p)).
\end{equation} 
\end{theorem}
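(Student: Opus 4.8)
The plan is to show that the equivalence $K_p \colon \C \downarrow B \to \mathsf{DiscFib}(Eq(p))$ — which we already have at our disposal, since normal epimorphisms are effective descent morphisms and $p$ is one such — restricts to an equivalence between the full subcategories $\mathsf{LocSSimple}_{\mathcal{X}}(B)$ and $\mathsf{DiscFib}_{\mathcal{X}}(Eq(p))$. Because $K_p$ is fully faithful and essentially surjective and the two subcategories in question are full (with $\mathcal{X}$ replete), it suffices to prove the object-level matching: for any $\alpha \colon A \to B$, the object $(A,\alpha)$ is a locally semisimple covering if and only if $K_p(A,\alpha)$ lies in $\mathsf{DiscFib}_{\mathcal{X}}(Eq(p))$. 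Since the top object of the discrete fibration $K_p(A,\alpha)$ in \eqref{image-of-K} is $E\times_B A$, this reduces to the equivalence
\[
(A,\alpha)\in\mathsf{LocSSimple}_{\mathcal{X}}(B)\iff E\times_B A\in\mathcal{X}.
\]
The easy implication is immediate: if $E\times_B A\in\mathcal{X}$, then $p^*(\alpha)$ is the morphism $\pi_1\colon E\times_B A\to E$ with both $E\times_B A\in\mathcal{X}$ and $E\in\mathcal{X}$ (the latter a standing hypothesis of the theorem), so $p$ itself witnesses $\alpha$ as a locally semisimple covering.

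The substantial implication is the converse, and here the two defining properties of a generalized semisimple class do the work. Suppose $\alpha$ is a locally semisimple covering, so there is a normal epimorphism $q\colon E'\to B$ with $E'\in\mathcal{X}$ and $E'\times_B A\in\mathcal{X}$. The idea is to compare the two trivializations $p$ and $q$ over their common refinement $P=E\times_B E'$, whose projections $\rho_1\colon P\to E$ and $\rho_2\colon P\to E'$ are normal epimorphisms by the pullback-stability of normal epimorphisms. I would then proceed in three steps. First, applying the closure property (1) to the pullback square defining $P$ (with $E\in\mathcal{X}$ and $E'\in\mathcal{X}$) gives $P\in\mathcal{X}$. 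Second, using the canonical identification $P\times_B A\cong P\times_{E'}(E'\times_B A)$ and applying property (1) again to the pullback along $\rho_2$ (with $P\in\mathcal{X}$ and $E'\times_B A\in\mathcal{X}$) yields $P\times_B A\in\mathcal{X}$. Third, via the other identification $P\times_B A\cong P\times_E(E\times_B A)$, I would invoke property (2) for the pullback of $E\times_B A$ along $\rho_1\colon P\to E$: here the base $E$, the distinguished object $P$ over it, and the pulled-back object $P\times_B A$ all lie in $\mathcal{X}$, so property (2) forces $E\times_B A\in\mathcal{X}$, as required.

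With the object-level correspondence established, the restricted functor $K_p\colon\mathsf{LocSSimple}_{\mathcal{X}}(B)\to\mathsf{DiscFib}_{\mathcal{X}}(Eq(p))$ is well defined, fully faithful (being a restriction of a fully faithful functor to full subcategories), and essentially surjective: any discrete fibration over $Eq(p)$ with top object in $\mathcal{X}$ is, up to isomorphism, $K_p(A,\alpha)$ for some $(A,\alpha)$, which then lies in $\mathsf{LocSSimple}_{\mathcal{X}}(B)$ by the easy implication and repleteness of $\mathcal{X}$. Hence it is an equivalence, giving \eqref{equiv-restricted}.

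I expect the main obstacle to be the third step: correctly matching the pullback $P\times_E(E\times_B A)\to E$ to property (2) of the generalized semisimple class — reading $E$ as the ``base'', $P$ as the object sitting over it, and $P\times_B A$ as the pulled-back object — while keeping the two associativity isomorphisms for $P\times_B A$ straight. A subtler point to flag is the reading that ``$p^*(\alpha)$ lies in the full subcategory $\mathcal{X}$'' means that \emph{both} its domain and its codomain belong to $\mathcal{X}$, so that the trivializing object is itself semisimple; it is exactly this convention that makes $E'\in\mathcal{X}$ available in the first step and is essential to the whole argument.
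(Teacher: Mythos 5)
Your proposal is correct and follows essentially the same route as the paper: the paper's proof is a two-line sketch that reduces the statement to the object-level equivalence $(A,\alpha)\in\mathsf{LocSSimple}_{\mathcal{X}}(B)\iff E\times_B A\in\mathcal{X}$ (deferring the verification to \cite{JMT}) and then (co)restricts the equivalence $K_p$, which is exactly your strategy. Your refinement argument via $P=E\times_B E'$, using pullback-stability of normal epimorphisms together with properties (1) and (2) of the generalized semisimple class, correctly supplies the details the paper omits.
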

\begin{proof}
This essentially follows from the two properties of the generalized semisimple classes recalled above, that guarantee that a morphism $f \colon A \rightarrow B$ belongs to the subcategory $ \mathsf{LocSSimple}_{\mathcal{X}}(B)$ if and only if the corresponding discrete fibration \eqref{image-of-K} is such that $E \times_B A \in \mathcal X$ (see \cite{JMT} for the details).
The equivalence 
$K_p \colon \C \downarrow B \rightarrow \mathsf{DiscFib}(Eq(p))$ then (co)restricts to the full subcategories $\mathsf{LocSSimple}_{\mathcal{X}}(B)$ (and $ \mathsf{DiscFib}_{\mathcal{X}}(Eq(p))$), yielding the announced equivalence \eqref{equiv-restricted}.

\end{proof}

\begin{remark}
\emph{Observe that the category $\mathsf{DiscFib}(Eq(p))$ is also called the category of \emph{internal $Eq(p)$-actions} in the literature \cite{JST}.}
\end{remark}

In particular we can consider $\C = \mathsf{PreOrdGrp}$, and $\mathcal{X}$ the class of objects of $\mathsf{ParOrdGrp}$, which is easily seen (by using Lemma $2.7$ in \cite{GRossi}, for instance) to be a generalized semisimple class.
We are therefore in a situation where we can apply Theorem \ref{thm locally semisimple coverings}. We first of all state the following lemma:

\begin{lemma}
A morphism $(h,\bar{h}) \colon (H,P_H) \rightarrow (G,P_G)$ in $\mathsf{PreOrdGrp}$ is a locally semisimple covering (relatively to the subcategory $\mathsf{ParOrdGrp}$) if and only if its kernel is a partially ordered group.
\end{lemma}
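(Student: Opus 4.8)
The plan is to prove the two implications directly, relying only on the fact --- recalled before Proposition~\ref{kernel-cokernel} --- that kernels and pullbacks in $\mathsf{PreOrdGrp}$ are computed componentwise at the group and monoid levels, together with the trivial observation that a submonoid of a reduced monoid is again reduced. Unwinding the definition, $(h,\bar h)$ is a locally semisimple covering (relatively to $\mathsf{ParOrdGrp}$) precisely when there is a normal epimorphism $p\colon (E,P_E)\rightarrow (G,P_G)$ such that the domain $E\times_G H$ of the pullback $p^{*}(h,\bar h)=\pi_1\colon E\times_G H\rightarrow E$ belongs to $\mathsf{ParOrdGrp}$; and by Proposition~\ref{kernel-cokernel} the condition $\mathsf{Ker}(h,\bar h)\in\mathsf{ParOrdGrp}$ simply says that the monoid $\mathsf{Ker}(h)\cap P_H$, the positive cone of the kernel, is reduced.

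For the forward implication I would start from a witnessing normal epimorphism $p$ with $E\times_G H\in\mathsf{ParOrdGrp}$. The point is that the kernel of the projection $\pi_1$ is isomorphic, in $\mathsf{PreOrdGrp}$, to $\mathsf{Ker}(h,\bar h)$: this is just the pullback-stability of kernels, and it is an isomorphism of preordered groups because both sides are formed componentwise. Since $\mathsf{Ker}(\pi_1)$ is a subobject of $E\times_G H$, its positive cone is a submonoid of the reduced monoid $P_{E\times_G H}$, hence reduced; transporting along the isomorphism shows $\mathsf{Ker}(h,\bar h)\in\mathsf{ParOrdGrp}$.

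For the converse I would \emph{not} use an arbitrary cover but the specific one provided by Proposition~\ref{ParOrdGrp covers PreOrdGrp}: an effective descent morphism --- equivalently, since the two classes coincide in $\mathsf{PreOrdGrp}$, a normal epimorphism --- $p\colon(E,P_E)\rightarrow(G,P_G)$ whose \emph{domain already lies in} $\mathsf{ParOrdGrp}$. It then suffices to verify $E\times_G H\in\mathsf{ParOrdGrp}$ by a direct reducedness check. Given two elements of its positive cone whose sum is zero, projecting to $E$ and using that $P_E$ is reduced kills the $E$-components; the surviving $H$-components then lie in $\mathsf{Ker}(h)\cap P_H$, which is reduced by hypothesis, so they vanish too. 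Hence $p$ exhibits $(h,\bar h)$ as a locally semisimple covering.

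The two halves are short once the componentwise descriptions are in place, so I expect no serious calculational obstacle. The genuinely load-bearing step is the choice of cover in the converse: merely knowing that $(h,\bar h)$ lies in $\M^{*}$, i.e.\ that some pullback of it is a trivial covering (cf.\ Theorem~\ref{description-coverings}), would not by itself place the pulled-back \emph{domain} in $\mathsf{ParOrdGrp}$. It is exactly the availability, from Proposition~\ref{ParOrdGrp covers PreOrdGrp}, of a cover $p$ with $E\in\mathsf{ParOrdGrp}$ that lets the elementary reducedness argument conclude.
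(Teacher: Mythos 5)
Your proposal is correct and follows essentially the same route as the paper's proof: the forward implication is identical (identify $\mathsf{Ker}(h,\bar h)$ with the kernel of the pulled-back morphism and use that a subobject of an object of $\mathsf{ParOrdGrp}$ is again in $\mathsf{ParOrdGrp}$), and for the converse both arguments use the specific cover $(p,\bar p)\colon (E,P_E)\rightarrow (G,P_G)$ with $(E,P_E)\in\mathsf{ParOrdGrp}$ supplied by Proposition~\ref{ParOrdGrp covers PreOrdGrp}, which you rightly flag as the load-bearing step. The only (harmless) divergence is in how the converse is concluded: you check reducedness of $P_E\times_{P_G}P_H$ by a direct element computation, whereas the paper deduces it from the general fact that the torsion-free subcategory of the torsion theory of Proposition~\ref{torsion OrdGrp} is closed under extensions; both are valid.
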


\begin{proof}
If $(h,\bar{h})$ is a locally semisimple covering there exists a normal epimorphism $(p,\bar{p}) \colon (E,P_E) \rightarrow (G,P_G)$ such that $(p,\bar{p})^*(h,\bar{h}) \in \mathsf{ParOrdGrp}$. It follows that $\mathsf{Ker} \left( (p,\bar{p})^*(h,\bar{h}) \right) \in \mathsf{ParOrdGrp}$. Now since the diagram
\begin{equation} \label{diag lemma Thm 4.2}
\begin{tikzcd}
(E,P_E) \times_{(G,P_G)} (H,P_H) \arrow[r] \arrow[d,"{(p,\bar{p})^*(h,\bar{h})}"']
& (H,P_H) \arrow[d,"{(h,\bar{h})}"] \\
(E,P_E) \arrow[r,"{(p,\bar{p})}"']
& (G,P_G)
\end{tikzcd}
\end{equation}
is a pullback we have that $\mathsf{Ker}(h,\bar{h}) \cong \mathsf{Ker} \left( (p,\bar{p})^*(h,\bar{h}) \right)$, so that $\mathsf{Ker}(h,\bar{h}) \in \mathsf{ParOrdGrp}$.

Conversely, by Proposition \ref{ParOrdGrp covers PreOrdGrp}, there exists an effective descent morphism (i.e. a normal epimorphism) $(p,\bar{p}) \colon (E,P_E) \rightarrow (G,P_G)$ with $(E,P_E) \in \mathsf{ParOrdGrp}$. Since the diagram \eqref{diag lemma Thm 4.2} is a pullback, we have that $\mathsf{Ker} \left( (p,\bar{p})^*(h,\bar{h}) \right) \cong \mathsf{Ker}(h,\bar{h})$ with $\mathsf{Ker}(h,\bar{h}) \in \mathsf{ParOrdGrp}$ by assumption. Knowing that any torsion-free subcategory is stable by extensions (see \cite{JT} for instance) and that $\mathsf{ParOrdGrp}$ is a torsion-free subcategory of $\mathsf{PreOrdGrp}$ (by Proposition \ref{torsion OrdGrp}), it follows that $(p,\bar{p})^*(h,\bar{h})$ is in $\mathsf{ParOrdGrp}$.
\end{proof}

\begin{remark}
\emph{The previous lemma is a particular case of a more general fact observed in \cite{JMT} (Proposition 2.3) where, more generally, the role of the kernel of an arrow was played by the “\emph{fibers}” (as defined in \cite{JMT}).}
\end{remark}

\begin{theorem} \label{thm action}
Let $(G,P_{G}) \in \mathsf{PreOrdGrp}$. Consider the effective descent morphism $$(f,\bar{f}) : \left(\Z \times G,(\N \times P_{G}) \backslash \{(0,g) \arrowvert g \neq 0 \}\right) \rightarrow (G,P_{G})$$ from Proposition \ref{ParOrdGrp covers PreOrdGrp}. Then there exists an equivalence of categories
\[ {\M}^* \downarrow(G,P_{G}) \cong \mathsf{DiscFib}_{\mathsf{ParOrdGrp}}({Eq(f,\bar{f}))}\]
where ${\M}^* \downarrow(G,P_{G})$  is the category of coverings over $(G,P_{G})$. \end{theorem}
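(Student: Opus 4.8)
The plan is to recognize the statement as a direct instance of Theorem \ref{thm locally semisimple coverings}, once the category of coverings has been identified with a category of locally semisimple coverings. All the substantive work has in fact already been carried out in the preceding lemma and in Proposition \ref{ParOrdGrp covers PreOrdGrp}, so the proof will consist in assembling these ingredients and checking that the hypotheses of Theorem \ref{thm locally semisimple coverings} are in force.

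First I would identify the two relevant subcategories of the slice $\mathsf{PreOrdGrp} \downarrow (G,P_G)$. By Theorem \ref{description-coverings}, the class $\M^*$ consists precisely of those morphisms in $\mathsf{PreOrdGrp}$ whose kernel lies in $\mathsf{ParOrdGrp}$; hence the objects of $\M^* \downarrow (G,P_G)$ are exactly the morphisms $(h,\bar{h}) \colon (H,P_H) \to (G,P_G)$ with $\mathsf{Ker}(h,\bar{h}) \in \mathsf{ParOrdGrp}$. On the other hand, the preceding lemma characterizes the locally semisimple coverings (relative to $\mathsf{ParOrdGrp}$) over $(G,P_G)$ by exactly the same condition on their kernel. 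Since both $\M^* \downarrow (G,P_G)$ and $\mathsf{LocSSimple}_{\mathsf{ParOrdGrp}}(G,P_G)$ are \emph{full} subcategories of the same slice category, determined by this single property of their objects, they coincide as categories:
\[ \M^* \downarrow (G,P_G) = \mathsf{LocSSimple}_{\mathsf{ParOrdGrp}}(G,P_G). \]

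Next I would verify the hypotheses of Theorem \ref{thm locally semisimple coverings} for $\C = \mathsf{PreOrdGrp}$, the class $\mathcal{X} = \mathsf{ParOrdGrp}$, and the specific morphism $(f,\bar{f})$. The category $\mathsf{PreOrdGrp}$ is normal and, as recalled in the Preliminaries from \cite{Clementino-Martins-Ferreira-Montoli}, its normal epimorphisms coincide with its effective descent morphisms. The subcategory $\mathsf{ParOrdGrp}$ is a generalized semisimple class, which was already noted above to follow from Lemma $2.7$ in \cite{GRossi}; concretely this amounts to checking the two pullback-stability conditions for objects with reduced positive cone. Finally, by Proposition \ref{ParOrdGrp covers PreOrdGrp} the morphism $(f,\bar{f})$ is a normal epimorphism, hence an effective descent morphism, whose domain $(\Z \times G,(\N \times P_{G}) \setminus \{(0,g) \mid g \neq 0\})$ lies in $\mathsf{ParOrdGrp}$, so it plays exactly the role of the morphism $p \colon E \to B$ with $E \in \mathcal{X}$.

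With these verifications in place, Theorem \ref{thm locally semisimple coverings} yields the equivalence
\[ \mathsf{LocSSimple}_{\mathsf{ParOrdGrp}}(G,P_G) \cong \mathsf{DiscFib}_{\mathsf{ParOrdGrp}}(Eq(f,\bar{f})), \]
realized by the restriction of the equivalence $K_{(f,\bar{f})}$; combined with the identification of the first step, this gives the desired equivalence. I do not anticipate any genuine obstacle: the only point that needs a moment of care is that the equality of $\M^* \downarrow (G,P_G)$ with $\mathsf{LocSSimple}_{\mathsf{ParOrdGrp}}(G,P_G)$ holds at the level of morphisms and not merely of objects, but this is automatic since both are full subcategories of the same slice cut out by the same condition.
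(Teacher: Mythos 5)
Your proposal is correct and follows essentially the same route as the paper's proof: apply Theorem \ref{thm locally semisimple coverings} to the effective descent morphism of Proposition \ref{ParOrdGrp covers PreOrdGrp}, whose domain lies in $\mathsf{ParOrdGrp}$, and then identify $\M^* \downarrow (G,P_G)$ with $\mathsf{LocSSimple}_{\mathsf{ParOrdGrp}}(G,P_G)$ via Theorem \ref{description-coverings} and the preceding lemma, both of which characterize the relevant morphisms by the condition $\mathsf{Ker}(h,\bar{h}) \in \mathsf{ParOrdGrp}$. Your explicit remark that the identification is an equality of full subcategories of the same slice, hence holds at the level of morphisms as well, is a point the paper leaves implicit but is entirely in the spirit of its argument.
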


\begin{proof}
Since the morphism $(f,\bar{f}) : \left(\Z \times G,(\N \times P_{G}) \backslash \{(0,g) \arrowvert g \neq 0 \}\right) \rightarrow (G,P_{G})$ from Proposition \ref{ParOrdGrp covers PreOrdGrp} is an effective descent morphism in $\mathsf{PreOrdGrp}$ such that $$\left(\Z \times G,(\N \times P_{G}) \backslash \{(0,g) \arrowvert g \neq 0 \}\right) \in \mathsf{ParOrdGrp}$$ we are allowed to apply Theorem \ref{thm locally semisimple coverings}: there exists then an equivalence of categories
\[\mathsf{LocSSimple}_{\mathsf{ParOrdGrp}}(G,P_{G}) \cong \mathsf{DiscFib}_{\mathsf{ParOrdGrp}}({Eq(f,\bar{f}))}.\]
Thanks to the previous lemma and Theorem \ref{description-coverings} the proof is complete since both the coverings and the locally semisimple coverings (over $(G,P_G)$) are described as the arrows $(h, \overline{h})  \colon (H,P_{H}) \rightarrow (G,P_{G})$ with $\mathsf{Ker}(h,\overline{h}) \in \mathsf{ParOrdGrp}$.
\end{proof}

Note that the internal equivalence relation $Eq(f,\bar{f})$ from Theorem \ref{thm action} is in fact the \textit{Galois groupoid} $\mathsf{Gal}(f,\bar{f})$ associated with the effective descent morphism $(f,\bar{f})$ \cite{Janelidze}. By definition the Galois groupoid associated with $(f,\bar{f})$ is indeed the image of $Eq(f,\bar{f})$ by the reflector $F : \mathsf{PreOrdGrp} \rightarrow \mathsf{ParOrdGrp}$. But since the diagram
\begin{center}
\begin{tikzcd}
(Eq(f),Eq(\bar{f})) \arrow[r,shift left=2.3] \arrow[r,shift right=2.3]
& (E,P_{E}) \arrow[l]
\end{tikzcd}
\end{center} 
lies in $\mathsf{ParOrdGrp}$ (where we write $(E,P_{E})$ for $\left(\Z \times G,(\N \times P_{G}) \backslash \{(0,g) \arrowvert g \neq 0 \}\right)$) the image of $Eq(f,\bar{f})$ by the reflector $F$ is $Eq(f,\bar{f})$ itself. In other words $Eq(f,\bar{f})$ is $\mathsf{Gal}(f,\bar{f})$, and 

\[{\M}^*\downarrow (G,P_{G}) \cong \mathsf{DiscFib}_{\mathsf{ParOrdGrp}}(\mathsf{Gal}(f,\bar{f}))
.\]
This equivalence is the \emph{classification} of the coverings as internal $\mathsf{Gal}(f,\bar{f})$-actions.

\begin{remark}
\emph{Besides its interest for the classification of coverings in the category of preordered groups, the above result also provides an example of application of Theorem 3.1 in \cite{JMT} in a non-exact setting (see Remark 3.2 (e) in \cite{JMT}).}
\end{remark}


\section{The torsion subcategory of protomodular objects}

In this last section we show that the reflection \ref{main-adj} gives also rise to a pretorsion theory in the category $\mathsf{PreOrdGrp}$ of preordered groups.
\subsection*{Pretorsion theories in general categories}

The concept of \textit{pretorsion theory} \cite{FF} allows one to extend the notion of torsion theory to a non-pointed category. Here we only recall the basic results which will be useful for this work, and we refer to \cite{Facchini-Finocchiaro-Gran} for the fundamental aspects of the theory. To adapt Definition \ref{def torsion theory} to a general category (not necessarily pointed)  a (non-empty) class $\z$ of objects of $\C$ is introduced, which somehow plays the role of the zero object, and we denote by $\n$ the class of morphisms in $\C$ that factorize through an object of $\z$. These special morphisms are called \textit{$\z$-trivial}. One can then extend the notions of kernel, cokernel and short exact sequence to get the notions of \textit{$\z$-prekernel}, \textit{$\z$-precokernel} and \textit{short $\z$-preexact sequence}.

From now on we assume $\C$ to be an arbitrary category. Given an arrow $f : A \rightarrow B$ in $\C$, one says that $k : K \rightarrow A$ is a \emph{$\z$-prekernel} of $f$ when
\begin{itemize}
\item[$\bullet$] $f \cdot k \in \n$;
\item[$\bullet$] for any morphism $\alpha : X \rightarrow A$ such that $f \cdot \alpha \in \n$, there exists a unique arrow $\phi : X \rightarrow K$ such that $k \cdot \phi = \alpha$.
\end{itemize}
Dually, an arrow $c : B \rightarrow C$ is a \emph{$\z$-precokernel} of $f \colon A \rightarrow B$ when
\begin{itemize}
\item[$\bullet$] $c \cdot f \in \n$;
\item[$\bullet$] for any morphism $\alpha : B \rightarrow X$ such that $\alpha \cdot f \in \n$, there exists a unique arrow $\phi : C \rightarrow X$ such that $\phi \cdot c = \alpha$.
\end{itemize}

Any $\z$-prekernel is a monomorphism and, dually, any $\z$-precokernel is an epimorphism.

\begin{definition}
Let $f : A \rightarrow B$ and $g : B \rightarrow C$ be two arrows in $\C$. The sequence
\begin{center}
\begin{tikzcd}
0 \arrow[r]
& A \arrow[r,"f"]
& B \arrow[r,"g"]
& C \arrow[r]
& 0
\end{tikzcd}
\end{center}
is a \emph{short $\z$-preexact sequence} when $f$ is a $\z$-prekernel of $g$ and $g$ is a $\z$-precokernel of $f$.
\end{definition}

We are now ready to recall the definition of \textit{pretorsion theory} \cite{FF, Facchini-Finocchiaro-Gran} (see also \cite{Mantovani, GJ} for an interesting and closely approach based on the notion of ideal of morphisms): 
\begin{definition} \label{def pretorsion theory}
A \emph{$\z$-pretorsion theory} in the category $\C$ is given by a pair $(\T,\F)$ of full replete subcategories of $\C$, with $\z = \T \cap \F$, such that:
\begin{itemize}
\item[$\bullet$] any morphism in $\C$ from $T \in \T$ to $F \in \F$ belongs to $\n$;
\item[$\bullet$] for any object $C$ of $\C$ there exists a short $\z$-preexact sequence
\begin{center}
\begin{tikzcd}
0 \arrow[r]
& T \arrow[r,"\epsilon_{C}"]
& C \arrow[r,"\eta_{C}"]
& F \arrow[r]
& 0
\end{tikzcd}
\end{center}
with $T \in \T$ and $F \in \F$.
\end{itemize}
\end{definition}

In a similar way as for classical torsion theories, the torsion-free subcategory $\F$ of a pretorsion theory $(\T,\F)$ is epireflective in $\C$ and, dually, the torsion subcategory $\T$ is monocoreflective in $\C$ \cite{Facchini-Finocchiaro-Gran}.

Also in this more general situation the $C$-component of the unit $\eta$ of the adjunction relative to the reflector $F \colon \C \rightarrow \F$ is given by the arrow $\eta_{C} : C \rightarrow F = UF(C)$ of Definition \ref{def pretorsion theory} where $U : \F \rightarrow \C$ is the inclusion functor, and the $C$-component of the counit of the adjunction $V \dashv T$ is given by the arrow $\epsilon_{C} : T = VT(C) \rightarrow C$, where $V : \T \rightarrow \C$ stands for the inclusion functor.

\subsection*{A pretorsion theory in the category $\mathsf{PreOrdGrp}$ of preordered groups}

\begin{proposition} \label{pretorsion OrdGrp}
The pair $(\mathsf{ProtoPreOrdGrp},\mathsf{ParOrdGrp})$ of full replete subcategories of $\mathsf{PreOrdGrp}$ is a $\z$-pretorsion theory in $\mathsf{PreOrdGrp}$, where $\z = \mathsf{ProtoPreOrdGrp} \cap \mathsf{ParOrdGrp}$ is given by 
$$\z = \{ (G,P_{G}) \, \mid \,  P_{G} = 0 \}.$$
Observe that the preordered groups in $\z$ are the ones endowed with the discrete order.
\end{proposition}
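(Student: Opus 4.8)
The plan is to verify the two conditions of Definition \ref{def pretorsion theory} for the pair $(\mathsf{ProtoPreOrdGrp}, \mathsf{ParOrdGrp})$, after first pinning down $\z$ and the class $\n$ of $\z$-trivial morphisms. For the identification of $\z$: an object $(G,P_G)$ lies in $\mathsf{ProtoPreOrdGrp}\cap\mathsf{ParOrdGrp}$ exactly when $P_G$ is simultaneously a group and a reduced monoid; but if $x\in P_G$ then $-x\in P_G$ (group), so $x+(-x)=0$ with both summands in $P_G$ (reduced) forces $x=0$. Hence $P_G=0$, i.e. the discrete order, and the converse is trivial. The crucial preliminary I would record is an explicit description of $\n$: a morphism $(u,\bar u)\colon (X,P_X)\to(Y,P_Y)$ is $\z$-trivial if and only if $\bar u=0$ (equivalently $u(P_X)=\{0\}$). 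Necessity is immediate, since factoring through an object $(Z,0)$ forces the positive-cone component of the composite to be zero; for sufficiency I use that $P_X$ is closed under conjugation, so the subgroup $\langle P_X\rangle$ it generates is normal in $X$ and $u(\langle P_X\rangle)=0$ when $\bar u=0$, which lets $(u,\bar u)$ factor as $(X,P_X)\to (X/\langle P_X\rangle,\,0)\to (Y,P_Y)$ through an object of $\z$.

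With this description in hand the first axiom is quick: given $(f,\bar f)\colon (G,P_G)\to(H,P_H)$ with $P_G$ a group and $P_H$ reduced, the same computation as in Proposition \ref{torsion OrdGrp} (apply $\bar f$ to $x+(-x)=0$ and use that $P_H$ is reduced) yields $\bar f=0$, whence $(f,\bar f)\in\n$ by the characterization above.

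For the second axiom I would take, for each $(G,P_G)$, the candidate sequence
\begin{center}
\begin{tikzcd}
0 \arrow[r]
& (G,N_G) \arrow[r,"\epsilon"]
& (G,P_G) \arrow[r,"\eta"]
& (G/N_G,\, P_G/N_G) \arrow[r]
& 0,
\end{tikzcd}
\end{center}
where $N_G=\{n\in G\mid n\in P_G,\ -n\in P_G\}$ is the largest subgroup inside $P_G$ (as in the proof of Proposition \ref{torsion OrdGrp}), $\epsilon$ is the identity on $G$ with the cone inclusion $N_G\hookrightarrow P_G$, and $\eta$ is the unit of \eqref{main-adj}, i.e. the quotient $\eta_G\colon G\to G/N_G$ together with $\bar\eta_G\colon P_G\to P_G/N_G$. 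Here $(G,N_G)\in\mathsf{ProtoPreOrdGrp}$ since $N_G$ is a group, and $(G/N_G,P_G/N_G)\in\mathsf{ParOrdGrp}$ by Proposition \ref{torsion OrdGrp}. To see that $\epsilon$ is the $\z$-prekernel of $\eta$: if $(\alpha,\bar\alpha)$ satisfies $\eta\cdot(\alpha,\bar\alpha)\in\n$, the characterization gives $\bar\eta_G(\bar\alpha(P_X))=0$, and since the cone-elements killed by $\bar\eta_G$ are exactly those of $N_G$, one gets $\bar\alpha(P_X)\subseteq N_G$; corestricting $\bar\alpha$ provides the factorization through $(G,N_G)$, unique because $\epsilon$ is a monomorphism. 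Dually, $\eta$ is the $\z$-precokernel of $\epsilon$: if $(\beta,\bar\beta)\cdot\epsilon\in\n$ then $\bar\beta(N_G)=0$, so $\beta$ factors through $G/N_G$ on the group level, while on the cone level $\bar\beta$ factors uniquely through $\bar\eta_G$ because, by Corollary \ref{lemma 2 M} and the discussion following it, $N_G\to P_G\to P_G/N_G$ is short exact in $\mathsf{Mon}$, that is $\bar\eta_G=\mathsf{coker}(\bar k_G)$; uniqueness follows since $\eta$ is an epimorphism.

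The routine parts are the group-level factorizations and the identification of $\z$. The step requiring the most care is the compatibility of the two induced maps — checking that the corestriction of $\bar\alpha$ in the prekernel case, and the pair $(\psi,\bar\psi)$ in the precokernel case, are genuine morphisms of $\mathsf{PreOrdGrp}$, i.e. that the relevant squares \eqref{morphismPreOrd} commute — together with the monoid-level factorization of $\bar\beta$ through $\bar\eta_G$, which is precisely where I must invoke that $\bar\eta_G$ is a \emph{cokernel} in $\mathsf{Mon}$ rather than merely a surjection. This is supplied by the special Schreier machinery already developed (Corollary \ref{lemma 2 M} and the results preceding it), so no genuinely new difficulty arises; the work lies in assembling these facts correctly.
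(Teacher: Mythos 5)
Your proposal is correct and follows essentially the same route as the paper: the same identification of $\z$, the same computation for the orthogonality axiom, and the same candidate sequence $(G,N_G)\to(G,P_G)\to(G/N_G,P_G/N_G)$ verified via the universal properties at the group and monoid levels. The only differences are organizational — you isolate the characterization of $\n$ as the morphisms with zero cone component (which the paper uses implicitly, factoring through the image $(f(G),0)$ rather than the quotient $(X/\langle P_X\rangle,0)$), and for the precokernel you invoke that $\bar\eta_G$ is a cokernel in $\mathsf{Mon}$ via the Schreier machinery where the paper instead uses the (regular epi, mono) diagonal fill-in; both are valid and rest on results already established.
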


\begin{proof}
To prove that any arrow $(f,\bar{f}) : (G,P_{G}) \rightarrow (H,P_{H})$ in $\mathsf{PreOrdGrp}$, with $(G,P_{G}) \in \mathsf{ProtoPreOrdGrp}$ and $(H,P_{H}) \in \mathsf{ParOrdGrp}$ factorizes through an object in $\z$, first observe that the following diagram commutes
\begin{equation} \label{diagram pretorsion theory}
\begin{tikzcd} [column sep = small, row sep = small] 
P_{G} \arrow[rr,"\bar{f}"] \arrow[dr,dotted] \arrow[ddd,tail]
& & P_{H} \arrow[ddd,tail]\\
 & 0 \arrow[d,tail] \arrow[ur,dotted]
& \\
 & f(G) \arrow[dr,dotted,tail]
& \\
G \arrow[ur,dotted,two heads] \arrow[rr,"f"']
& & H,
\end{tikzcd}
\end{equation}
where $f(G)$ is the image of the group morphism $f \colon G \rightarrow H$. Indeed, as explained in the first part of the proof of Proposition \ref{torsion OrdGrp}, any monoid morphism from a group to a reduced monoid is the $0$-arrow.
Since $f(G)$ is a group and since any part of the diagram \ref{diagram pretorsion theory} commutes, we conclude that any arrow in $\mathsf{PreOrdGrp}$ from an object of $\mathsf{ProtoPreOrdGrp}$ to an object of $\mathsf{ParOrdGrp}$ factorizes through an object of $\z$, i.e it belongs to $\n$.

Consider now any preordered group $(G,P_{G})$. We will work as before with the normal subgroup $N_{G}$ of $G$. We then consider the (regular epimorphism, monomorphism)-factorization of $\eta_{G} \cdot g$ in the category $\mathsf{Mon}$ of monoids, where
\begin{tikzcd}
G \arrow[r,two heads,"\eta_{G}"] 
& G/N_{G}
\end{tikzcd}
is the quotient morphism and
\begin{tikzcd}
P_{G} \arrow[r,tail,"g"]
& G
\end{tikzcd}
is the inclusion arrow:
\begin{center}
\begin{tikzcd}
P_{G} \arrow[r,two heads,"\bar{\eta}_{G}"] \arrow[d,tail,"g"']
& P_{G}/N_{G} \arrow[d,tail,"\psi_{G}"]\\
G \arrow[r,two heads,"\eta_{G}"']
& G/N_{G}.
\end{tikzcd}
\end{center}
Let us prove that the sequence
\begin{center}
\begin{tikzcd}
N_{G} \arrow[r,tail,"i"] \arrow[d,tail,"\phi_{G}"']
& P_{G} \arrow[r,two heads,"\bar{\eta}_{G}"] \arrow[d,tail,"g"']
& P_{G}/N_{G} \arrow[d,tail,"\psi_{G}"]\\
G \arrow[r,equal]
& G \arrow[r,two heads,"\eta_{G}"']
& G/N_{G} 
\end{tikzcd}
\end{center}
in which $(G,N_{G}) \in \mathsf{ProtoPreOrdGrp}$ and $(G/N_{G},P_{G}/N_{G}) \in \mathsf{ParOrdGrp}$ is a short $\z$-preexact sequence in $\mathsf{PreOrdGrp}$. 

We begin by showing that $(\eta_{G},\bar{\eta}_{G})$ is the $\z$-precokernel of the arrow $(1_{G},i)$. Let us consider a morphism $(f,\bar{f}) : (G,P_{G}) \rightarrow (H,P_{H})$ in $\mathsf{PreOrdGrp}$ such that $(f,\bar{f}) \cdot (1_{G},i) \in \n$, i.e. such that $(f,\bar{f}) \cdot (1_{G},i)$ factorizes through an object $(A,0)$ of $\z$: $(f,\bar{f}) \cdot (1_{G},i) = (b,\bar{b}) \cdot (a,\bar{a})$.
\begin{center}
\begin{tikzcd}
N_{G} \arrow[rr,tail,"i"] \arrow[dr,"\bar{a}"] \arrow[ddd,tail,"\phi_{G}"']
& & P_{G} \arrow[rr,two heads,"\bar{\eta}_{G}"] \arrow[dr,"\bar{f}"] \arrow[ddd,tail,"g"']
& & P_{G}/N_{G} \arrow[ddd,tail,"\psi_{G}"] \arrow[dl,dotted,"\bar{\alpha}"']\\
 & 0 \arrow[rr,near start,"\bar{b}"] \arrow[d,tail]
& & P_{H} \arrow[d,tail,"h"]
& \\
 & A \arrow[rr,near start,"b"']
& & H & \\
G \arrow[rr,equal] \arrow[ur,"a"']
& & G \arrow[ur,"f"'] \arrow[rr,two heads,"\eta_{G}"']
& & G/N_{G} \arrow[ul,dotted,"\alpha"]
\end{tikzcd}
\end{center} 
In particular $f \cdot \phi_{G} = b \cdot a \cdot \phi_{G} = 0$. Since $\eta_{G}$ is the cokernel of $\phi_{G}$ in the category $\mathsf{Grp}$ of groups, by the universal property of the cokernel, there exists a unique arrow $\alpha : G/N_{G} \rightarrow H$ in $\mathsf{Grp}$ such that $\alpha \cdot \eta_{G} = f$. Now, seeing that $\bar{\eta}_{G}$ is a regular epimorphism in $\mathsf{Mon}$ and that $h$ is a monomorphism, the universal property of strong epimorphisms yields a unique arrow $\bar{\alpha} : P_{G}/N_{G} \rightarrow P_{H}$ such that $\bar{\alpha} \cdot \bar{\eta}_{G} = \bar{f}$ and $h \cdot \bar{\alpha} = \alpha \cdot \psi_{G}$. In other words there exists a unique arrow $(\alpha,\bar{\alpha}) : (G/N_{G},P_{G}/N_{G}) \rightarrow (H,P_{H})$ in $\mathsf{PreOrdGrp}$ such that $(\alpha,\bar{\alpha}) \cdot (\eta_{G},\bar{\eta}_{G}) = (f,\bar{f})$, i.e. $(\eta_{G},\bar{\eta}_{G})$ is the $\z$-precokernel of $(1_{G},i)$.

Next we show that $(1_{G},i)$ is the $\z$-prekernel of $(\eta_{G},\bar{\eta}_{G})$. Consider $(f,\bar{f}) : (H,P_{H}) \rightarrow (G,P_{G})$ in $\mathsf{PreOrdGrp}$ such that $(\eta_{G},\bar{\eta}_{G}) \cdot (f,\bar{f}) \in \n$, i.e. $(\eta_{G},\bar{\eta}_{G}) \cdot (f,\bar{f})$ factorizes through an object $(A,0)$ of $\z$: $(\eta_{G},\bar{\eta}_{G}) \cdot (f,\bar{f}) = (b,\bar{b}) \cdot (a,\bar{a})$.
\begin{center}
\begin{tikzcd}
N_{G} \arrow[rr,tail,"i"] \arrow[ddd,tail,"\phi_{G}"']
& & P_{G} \arrow[rr,two heads,"\bar{\eta}_{G}"] \arrow[ddd,tail,"g"']
& & P_{G}/N_{G} \arrow[ddd,tail,"\psi_{G}"]\\
 & P_{H} \arrow[ul,dotted,"\bar{\alpha}"'] \arrow[ur,"\bar{f}"] \arrow[d,tail,"h"'] \arrow[rr,near end,"\bar{a}"]
& & 0 \arrow[ur,"\bar{b}"] \arrow[d,tail]
& \\
 & H \arrow[dl,dotted,"\alpha"] \arrow[dr,"f"'] \arrow[rr,near end,"a"']
& & A \arrow[dr,"b"']
& \\
G \arrow[rr,equal]
& & G \arrow[rr,two heads,"\eta_{G}"']
& & G/N_{G}
\end{tikzcd}
\end{center}
Let us take $\alpha = f$, since this is the only possible arrow such that $1_G \cdot \alpha =f$. Now we have that $\bar{\eta}_{G} \cdot \bar{f} = 0$, hence $\eta_{G} \cdot g \cdot \bar{f} = 0$. Since $\phi_{G}$ is the kernel of $\eta_{G}$ in $\mathsf{Mon}$ (and in $\mathsf{Grp}$) there is a unique arrow $\bar{\alpha} : P_{H} \rightarrow N_{G}$ such that $\phi_{G} \cdot \bar{\alpha} = g \cdot \bar{f}$. Then
\[g \cdot i \cdot \bar{\alpha} = \phi_{G} \cdot \bar{\alpha} = g \cdot \bar{f}\]
and since $g$ is a monomorphism it follows that $i \cdot \bar{\alpha} = \bar{f}$. The morphism $\bar{\alpha}$ is moreover unique with this property, because $i$ is a monomorphism. As a conclusion $(\alpha,\bar{\alpha}) : (H,P_{H}) \rightarrow (G,N_{G})$
is the unique arrow such that $(1_{G},i) \cdot (\alpha,\bar{\alpha}) = (f,\bar{f})$, and $(1_{G},i)$ is the $\z$-prekernel of $(\eta_{G},\bar{\eta}_{G})$. 
\end{proof}

From this Proposition we deduce in particular that the subcategory $\mathsf{ProtoPreOrdGrp}$ of protomodular objects is monocoreflective in $\mathsf{PreOrdGrp}$: 
\begin{corollary}
The subcategory $\mathsf{ProtoPreOrdGrp}$ of protomodular objects is monocoreflective in the category $\mathsf{PreOrdGrp}$ of preordered groups:
\begin{equation}\label{second adj}
\begin{tikzcd}[column sep = tiny]
\mathsf{PreOrdGrp} \arrow[rr,shift right=0.2cm,"T"']
& \bot
& \mathsf{ProtoPreOrdGrp}. \arrow[ll,shift right=0.2cm,hook,"V"']
\end{tikzcd}
\end{equation}
\end{corollary}
It turns out that the inclusion functor $V : \mathsf{ProtoPreOrdGrp} \hookrightarrow \mathsf{PreOrdGrp}$ is not only a left adjoint but also a right adjoint:
\begin{proposition}\label{reflective}
The functor $V : \mathsf{ProtoPreOrdGrp} \hookrightarrow \mathsf{PreOrdGrp}$ has a left adjoint functor $E : \mathsf{PreOrdGrp} \rightarrow \mathsf{ProtoPreOrdGrp}$:
\begin{equation}\label{third adj}
\begin{tikzcd}[column sep = tiny]
\mathsf{PreOrdGrp} \arrow[rr,shift left=0.2cm,"E"]
& \bot
& \mathsf{ProtoPreOrdGrp}. \arrow[ll,shift left=0.2cm,hook',"V"]
\end{tikzcd}
\end{equation}
\end{proposition}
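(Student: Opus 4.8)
The plan is to construct the left adjoint $E$ explicitly by replacing the positive cone $P_G$ with the subgroup of $G$ that it generates. Concretely, for an object $(G,P_G)$ of $\mathsf{PreOrdGrp}$ I would set
\[
E(G,P_G) = (G,\langle P_G\rangle),
\]
where $\langle P_G\rangle$ denotes the subgroup of $G$ generated by the submonoid $P_G$. First I would check that $(G,\langle P_G\rangle)$ really lies in $\mathsf{ProtoPreOrdGrp}$: since $\langle P_G\rangle$ is by construction a subgroup of $G$ it is in particular a group, and it is closed under conjugation in $G$ because $P_G$ is and conjugation by any element of $G$ is a group automorphism, hence carries the subgroup generated by $P_G$ to itself. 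Thus $\langle P_G\rangle$ is an admissible positive cone whose underlying monoid is a group.

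The unit of the adjunction at $(G,P_G)$ would be the morphism $(1_G,j)\colon (G,P_G)\rightarrow (G,\langle P_G\rangle)$, where $j\colon P_G\hookrightarrow \langle P_G\rangle$ is the inclusion; this is a well-defined arrow of $\mathsf{PreOrdGrp}$ since the identity on $G$ merely enlarges the positive cone and the relevant square as in \eqref{morphismPreOrd} commutes. To see that $E$ is functorial and that $(1_G,j)$ is natural, I would note that for any morphism $(f,\bar{f})\colon (G,P_G)\rightarrow (H,P_H)$ the group homomorphism $f$ satisfies $f(P_G)\subseteq P_H\subseteq \langle P_H\rangle$, whence $f(\langle P_G\rangle)=\langle f(P_G)\rangle\subseteq \langle P_H\rangle$; so $(f, f|_{\langle P_G\rangle})$ is a morphism $E(G,P_G)\rightarrow E(H,P_H)$.

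The heart of the argument is the universal property, which will establish that $\mathsf{ProtoPreOrdGrp}$ is reflective. Given $(f,\bar{f})\colon (G,P_G)\rightarrow (H,P_H)$ with $(H,P_H)\in\mathsf{ProtoPreOrdGrp}$, I must produce a unique $(g,\bar{g})\colon (G,\langle P_G\rangle)\rightarrow (H,P_H)$ with $(g,\bar{g})\cdot(1_G,j)=(f,\bar{f})$. Composition with $(1_G,j)$ forces $g=f$ at the group level, so uniqueness is immediate; what remains is to verify that $f$ is genuinely a morphism out of $(G,\langle P_G\rangle)$, i.e. that $f(\langle P_G\rangle)\subseteq P_H$. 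This is exactly where the hypothesis that $P_H$ is a \emph{group} enters: since $f$ is a group homomorphism with $f(P_G)\subseteq P_H$ and $P_H$ is a subgroup of $H$, the image $f(\langle P_G\rangle)=\langle f(P_G)\rangle$ is contained in the subgroup $\langle P_H\rangle = P_H$. Hence $\bar{g}:=f|_{\langle P_G\rangle}$ is the required factorization, and $E\dashv V$.

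I do not expect a genuine obstacle here: the construction is essentially forced, as the left adjoint must send $(G,P_G)$ to the group completion of its cone inside $G$, and every verification reduces to the single elementary fact that a group homomorphism carries the subgroup generated by $P_G$ into any subgroup containing the image of $P_G$. The only point requiring minor care is the closure of $\langle P_G\rangle$ under conjugation, which follows at once from the corresponding property of $P_G$. It is worth contrasting this reflector with the monocoreflector $T$ of the pretorsion theory, which instead \emph{shrinks} the cone to the subgroup $N_G$ of invertible positive elements; the present $E$ \emph{enlarges} it to $\langle P_G\rangle$, so the two adjoints are genuinely distinct.
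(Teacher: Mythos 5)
Your proposal is correct and follows essentially the same route as the paper: the subgroup $\langle P_G\rangle$ you construct is exactly the paper's $M_G$ (the subgroup generated by $P_G \cup (-P_G)$, which coincides with the subgroup generated by $P_G$), the unit is the same inclusion $(1_G,j)$, and the universal property is verified by the same observation that a group homomorphism carries $\langle P_G\rangle$ into the subgroup $P_H$. No gaps.
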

\begin{proof}
We begin with the construction of the functor $E : \mathsf{PreOrdGrp} \rightarrow \mathsf{ProtoPreOrdGrp}$. Let $(G,P_{G})$ be any preordered group. Consider the subgroup $M_{G}$ of $G$ generated by all elements in $P_{G} \cup (- P_{G})$, where we write $-P_{G}$ for the submonoid
\[- P_{G} = \{ x \in G \ \arrowvert \ \exists \ g \in P_{G} \ \text{such that} \ x = -g\}.\]
Any element $m$ of $M_{G}$ is of the form
$m = g_{1} - g_{2} + g_{3} - \dots + g_{n-1} - g_{n}$
for $g_{1}, \dots, g_{n} \in P_{G}$. Since both $P_{G}$ and $- P_{G}$ are submonoids of $G$ it is clear that $M_{G}$ is a subgroup of $G$. And this subgroup is in addition normal in $G$. Indeed, let $g \in G$ and let $m = g_{1} - g_{2} + \dots + g_{n-1} - g_{n}$ be an element in $M_{G}$ (where $g_{1}, \dots, g_{n} \in P_{G}$). Then
\begin{align*}
g + m - g & = g + g_{1} - g_{2} + \dots + g_{n-1} - g_{n} - g\\
& = (g + g_{1} - g) + (g - g_{2} - g) + \dots + (g + g_{n-1} - g) + (g - g_{n} -g)\\
& = (g + g_{1} - g) - (g + g_{2} -g) + \dots + (g + g_{n-1} -g) - (g + g_{n} - g)
\end{align*}
with $g + g_{i} - g \in P_{G}$ for any $i \in \{1, \dots ,n\}$ since $P_{G}$ is closed under conjugation in $G$, and $g + m - g \in M_{G}$. Accordingly $(G,M_{G})$ in an object of the subcategory $\mathsf{ProtoPreOrdGrp}$. This construction is obviously functorial,  and we write $E \colon  \mathsf{PreOrdGrp} \rightarrow \mathsf{ProtoPreOrdGrp}$ for the functor defined on objects by $E(G,P_{G}) = (G,M_{G})$, for any $(G,P_{G}) \in \mathsf{PreOrdGrp}$.

Let us now prove that this functor $E$ is a left adjoint of the functor $V \colon \mathsf{ProtoPreOrdGrp} \rightarrow \mathsf{PreOrdGrp}$. Let $(G,P_{G}) \in \mathsf{PreOrdGrp}$, and let us check that the $(G,P_{G})$-component of the unit of the adjunction is given by the arrow $(1_{G},j)$
\begin{center}
\begin{tikzcd}
P_{G} \arrow[r,tail,"j"] \arrow[d,tail,"g"']
& M_{G} \arrow[d,tail,"n"]\\
G \arrow[r,equal]
& G
\end{tikzcd}
\end{center}
where 
\begin{tikzcd}
P_{G} \arrow[r,tail,"j"]
& M_{G}
\end{tikzcd}
is the inclusion morphism. Let $(H,P_{H}) \in \mathsf{ProtoPreOrdGrp}$ and consider any morphism $(f,\bar{f}) : (G,P_{G}) \rightarrow V(H,P_{H}) = (H,P_{H})$.
\begin{center}
\begin{tikzcd}
P_{G} \arrow[rr,tail,"j"] \arrow[dr,"\bar{f}"'] \arrow[ddd,tail,"g"']
& & M_{G} \arrow[ddd,tail,"n"] \arrow[dl,dotted,"\bar{\phi}"]\\
 & P_{H} \arrow[d,tail,"h"] & \\
 & H & \\
G \arrow[ur,"f"] \arrow[rr,equal]
& & G \arrow[ul,dotted,"\phi"']
\end{tikzcd}
\end{center}
There exists a unique morphism $\phi = f : G \rightarrow H$ with the property $\phi \cdot 1_{G} = f$. We then observe that, for any $m = g_{1} - g_{2} + \dots + g_{n-1} - g_{n} \in M_{G}$ (with $g_{1}, \dots, g_{n} \in P_{G}$),
\begin{align*}
f(m) & = f(g_{1} - g_{2} + \dots + g_{n-1} - g_{n})\\
& = f(g_{1}) - f(g_{2}) + \dots + f(g_{n-1}) - f(g_{n})
\end{align*}
with $f(g_{i}) \in P_{H}$ since $g_{i} \in P_{G}$ for any $i \in \{1, \dots , n\}$. Hence $f(m) \in P_{H}$ since $P_{H}$ is a group by assumption. This means that the restriction $f_{\arrowvert M_{G}}$ of $f$ to $M_{G}$ takes its values in $P_{H}$. Let us then define $\bar{\phi} = f_{\arrowvert M_{G}} : M_{G} \rightarrow P_{H}$. We can then check that $\bar{\phi} \cdot j = \bar{f}$, and we observe that, for any $m \in M_{G}$,
\[(\phi \cdot n) (m) = (f \cdot n) (m) = f(m) = \bar{\phi}(m) = (h \cdot \bar{\phi})(m),\]
so that $\phi \cdot n = h \cdot \bar{\phi}$. Accordingly there exists a unique morphism $(\phi,\bar{\phi}) = (f, f_{\arrowvert M_{G}}) : (G,M_{G}) \rightarrow (H,P_{H})$ such that $(\phi,\bar{\phi}) \cdot (1_{G},j) = (f,\bar{f})$, and the proof is complete.
\end{proof}


\begin{bibdiv}

\begin{biblist}

\bib{Borceux}{book}{
   author={Borceux, Francis},
   title={Handbook of Categorical Algebra, vol.2},
   publisher={Encyclopedia Math. Appl., vol. 51, Cambridge Univ. Press},
   date={1994},
}

\bib{BB}{book}{
   author={Borceux, Francis},
   author={Bourn, Dominique},
   title={Mal'cev, protomodular, homological and semi-abelian categories},
   publisher={Mathematics and its applications, vol. 566, Kluwer},
   date={2004},
}

\bib{BC}{article}{
   author={Borceux, Francis},
   author={Clementino, Maria Manuel},
   title={Topological semi-abelian algebras},
  journal={Adv. Math. 190},
   date={2005},
    pages={425--453}
}

\bib{Bourn-Gran}{article}{
   author={Bourn, D.},
   author={Gran, M.},
   title={Torsion theories in homological categories},
   journal={J. Algebra 305},
   date={2006},
   pages={18--47},
}

\bib{BJ}{article}{
   author={Bourn, D.},
   author={Janelidze, Z.},
   title={Pointed protomodularity via natural imaginary subtractions},
   journal={J. Pure Appl. Algebra 213},
   date={2009},
   pages={1835--1851},
}

\bib{BMFMS}{book}{
   author={Bourn, D.},
   author={Martins-Ferreira, N.},
   author={Montoli, A.},  
   author={Sobral, M.},
   title={Schreier split epimorphisms in monoids and in semirings},
   publisher={Textos de Matem\'{a}tica (Série B), vol. 45, Departamento de Matem\'{a}tica da Universidade de Coimbra},
   date={2013},
}

\bib{BS}{book}{
   author={Burris, S.},
   author={Sankappanavar, H.P.},
   title={A course in universal algebra},
   publisher={Springer-Verlag},
   date={1981},
}

\bib{Carboni-Janelidze-Kelly-Pare}{article}{
   author={Carboni, A.},
   author={Janelidze, G.},
   author={Kelly, G.M.},   
   author={Paré, R.},
   title={On localization and stabilization for factorization systems},
   journal={Appl. Categ. Struct. 5},
   date={1997},
   pages={1--58},
}

\bib{Cassidy-Hebert-Kelly}{article}{
   author={Cassidy, C.},
   author={Hébert, M.},
   author={Kelly, G.M.},
   title={Reflective subcategories, localizations and factorization systems},
   journal={J. Austral. Math. Soc. 38},
   date={1985},
   pages={237--329},
}

\bib{CDT}{article}{
   author={Clementino, M.M.},
   author={Dikranjan, D.},
   author={Tholen, W.},
   title={Torsion theories and radicals in normal categories},
   journal={J. Algebra 305},
   date={2006},
   pages={98-129},
}

\bib{Clementino-Martins-Ferreira-Montoli}{article}{
   author={Clementino, M.M.},
   author={Martins-Ferreira, N.},
   author={Montoli, A.},
   title={On the categorical behaviour of preordered groups},
   journal={J. Pure Appl. Algebra 223},
   date={2019},
   pages={4226-4245},
}

\bib{Everaert-Gran}{article}{
   author={Everaert, T.},
   author={Gran, M.},
   title={Monotone-light factorisation systems and torsion theories},
   journal={Bull. Sci. Math. 137},
   date={2013},
   pages={996--1006},
}

\bib{Everaert-Gran15}{article}{
   author={Everaert, T.},
   author={Gran, M.},
   title={Protoadditive functors, derived torsion theories and homology},
   journal={J. Pure Appl. Algebra 219},
   date={2015},
   pages={3629--3676},
}

\bib{Facchini}{article}{
   author={Facchini, A.},
   title={Commutative monoids, noncommutative rings and modules},
   journal={Preprint, Lecture Notes of a \emph{Summer School in Algebra and in Topology} held at the Universit\'e catholique de Louvain,},
   date={2020},
   pages={},
}

\bib{FF}{article}{
   author={Facchini, A.},
   author={Finocchiaro, C.},
   title={Pretorsion theories, stable category and preordered sets},
   journal={Annali Mat. Pura Appl., 4},
   date={2019},
   pages={https://doi.org/10.1007/s10231-019-00912-2},
}

\bib{FFG}{article}{
   author={Facchini, A.},
   author={Finocchiaro, C.},
   author={Gran, M.},
   title={A new Galois structure in the category of preordered sets},
   journal={Theory Appl. Categ. 35 (11)},
   date={2020},
   pages={326-349},
}

\bib{Facchini-Finocchiaro-Gran}{article}{
   author={Facchini, A.},
   author={Finocchiaro, C.},
   author={Gran, M.},
   title={Pretorsion theories in general categories},
   journal={J. Pure Appl. Algebra 225},
   date={2021},
   pages={106503},
}
\bib{Fichtner}{article}{
   author={Fichtner, K.},
   title={Varieties of universal algebras with ideals},
   journal={Mat. Sb. N. S., 75},
   date={1968},
   pages={445-453},
}

\bib{GRossi}{article}{
   author={Gran, M.},
   author={Rossi, V.},
   title={Torsion theories and Galois coverings of topological groups},
   journal={J. Pure Appl. Algebra 208},
   date={2007},
   pages={135--151},
}

\bib{GSV}{article}{
   author={Gran, M.},
   author={Sterck, F.},
    author={Vercruysse, J.},
   title={A semi-abelian extension of a theorem of Takeuchi  },
   journal={J. Pure Appl. Algebra 223},
   date={2019},
   pages={4171--4190},
}


\bib{GJ}{article}{
   author={Grandis, M.},
   author={Janelidze, G.},
   title={From torsion theories to closure operators and factorization systems},
   journal={Categ. Gen. Algebr. Struct. Appl., vol. 12, no 1},
   date={2020},
   pages={89--121},
}

\bib{JMT}{article}{
   author={Janelidze, G.},
   author={M\'{a}rki, L.},
   author={Tholen, W.},
   title={Locally semisimple coverings},
   journal={J. Pure Appl. Algebra 128},
   date={1998},
   pages={281--289},
}

\bib{Janelidze}{article}{
   author={Janelidze, G.},
   title={Pure Galois theory in categories},
   journal={J. Algebra 132},
   date={1990},
   pages={270--286},
}

\bib{JK}{article}{
   author={Janelidze, G.},
   author={Kelly, G.M.}
   title={Galois theory and a general notion of central extension},
   journal={J. Pure Appl. Algebra 97},
   date={1994},
   pages={135--161},
}

\bib{JST}{article}{
   author={Janelidze, G.},
   author={Sobral, M.},
   author={Tholen, W.},
   title={Beyond Barr exactness: effective descent morphisms, in: M.C Pedicchio, W. Tholen (Eds.), Categorical Foundations, in: Encyclopedia of Mathematics and Its Applications},
   journal={Cambridge University Press},
   date={2004},
   pages={359--405},
}

\bib{JT}{book}{
   author={Janelidze, G.},
   author={Tholen, W.},
   title={Characterization of torsion theories in general categories},
   publisher={Categories in algebra, geometry and mathematical physics, 249256, Contemp. Math., 431, Amer. Math. Soc., Providence, RI},
   date={2007},
}

\bib{JZ}{article}{
   author={Janelidze, Z.},
   title={The pointed subobject functor, {$3\times3$} lemmas, and
              subtractivity of spans },
   journal={Theory Appl. Categ. 23 (11)},
   date={2010},
   pages={221--242},
}

\bib{Joh}{article}{
   author={Johnstone, P.T.},
   title={A note on the Semi-abelian Variety of Heyting Semilattices},
   journal={Fields Institute Communications 43},
   date={2004},
   pages={317-318},
}

\bib{Mantovani}{article}{
   author={Mantovani, S.},
    title={Torsion theories for crossed modules},
   journal={invited talk at the ``Workshop on category theory and topology'', September 2015, Universit\'e catholique de Louvain},
}

\bib{MRV}{article}{
   author={Montoli, A.},
   author={Rodelo, D.},
     author={Van der Linden, T.},
   title={Two characterisations of groups amongst monoids},
   journal={J. Pure Appl. Algebra 222 (4)},
   date={2018},
   pages={747-777},
}

\bib{Xarez}{article}{
   author={Xarez, J.},
   title={The monotone-light factorization for categories via preordered and ordered sets },
   journal={PhD thesis, University of Aveiro},
   date={2003},
}

\end{biblist}

\end{bibdiv}

\end{document}